\newtheorem{theorem}{Theorem}[section]
\newtheorem{corollary}[theorem]{Corollary}
\newtheorem{proposition}[theorem]{Proposition}
\theoremstyle{definition}
\newtheorem{definition}[theorem]{Definition}
\newtheorem{remark}[theorem]{Remark}
\newtheorem{example}[theorem]{Example}
\newcommand{\Cc}{{\mathscr C}}
\newcommand{\Ee}{{\mathscr E}}
\newcommand{\Bb}{{\mathscr B}}
\newcommand{\Mm}{{\mathscr M}}
\newcommand{\Aa}{{\mathscr A}}
\newcommand{\en}{{\mathbb n}}
\newcommand{\zero}{{\mathbb 0}}
\newcommand{\one}{{\mathbb 1}}
\newcommand{\two}{{\mathbb 2}}
\newcommand{\bbb}{{\mathbb B}}
\newcommand{\eee}{{\mathbb E}}
\newcommand{\nn}{{\mathbb N}}
\newcommand{\Z}{{\mathbb Z}}
\newcommand{\aaa}{{\mathbb T}}
\newcommand{\Hom}{{\rm Hom}}
\newcommand{\llim}{{\rm lim}}
\newcommand{\clim}{{\rm colim}}
\newcommand{\Ext}{{\rm Ext}}
\newcommand{\Tor}{{\rm Tor}}
\newcommand{\Mor}{{\rm Mor}}
\newcommand{\h}{{\mathcal H}}
\newcommand{\Cat}{{\mathfrak{Cat}}}
\newcommand{\Fun}{{\mathfrak{Fun}}}
\newcommand{\PsdFun}{{\mathfrak{PsdFun}}}
\newcommand{\Fib}{{\mathfrak{Fib}}}
\newcommand{\Ab}{{\mathfrak{Ab}}}
\newcommand{\Nat}{{\mathfrak{Nat}}}
\newcommand{\theories}{{\mathfrak{Theories}}}
\begin{document}
\def\ead{\email}

\title[Andr\'e spectral sequences for Baues-Wirsching cohomology]{Andr\'e spectral sequences for Baues-Wirsching cohomology of categories}

\author{Imma G\'alvez-Carrillo}
\address{Departament de Matem\`atica Aplicada III,
      Universitat Polit\`ecnica de Cata\-lunya\\ Escola d'Enginyeria de Terrassa, Carrer Colom 1, 08222 Terrassa (Bar\-celona), Spain}
\ead{m.immaculada.galvez@upc.edu}

\author{Frank Neumann}
\address{Department of Mathematics, University of Leicester\\
University Road, Leicester LE1 7RH, England, United Kingdom}
\ead{fn8@mcs.le.ac.uk}

\author{Andrew Tonks
}
\address{Faculty of Computing, London Metropolitan University\\
166--220 Holloway Road, London N7 8DB, United Kingdom}
\ead{a.tonks@londonmet.ac.uk}

\begin{abstract}
  We construct spectral sequences in the framework of Baues-Wirsching
  cohomology and homology for functors between small categories and
  analyze particular cases including Grothendieck fibrations. We also
  give applications to more classical cohomology and homology theories
  including Hochschild-Mitchell cohomology and those studied before by
  Watts, Roos, Quillen and others.
\end{abstract}

\keywords{
  Andr\'e spectral sequence, 
Baues-Wirsching cohomology and  homology, 
Grothendieck fibrations
}

\maketitle

\section*{Introduction}
In a fundamental paper  \cite{BW}, Baues and Wirsching introduced a very general version of cohomology for small categories. This Baues-Wirsching cohomology uses general coefficients given by natural systems. A natural system on a small category $\Cc$ is a functor from the factorization category $F(\Cc)$ into the category $\Ab$ of abelian groups. Baues-Wirsching cohomology generalizes at once many of the previously constructed versions of cohomology for categories, including Hochschild-Mitchell cohomology \cite{Mi}, the various versions of cohomology of small categories introduced by Watts \cite{Wa}, Grothendieck \cite{G}, Roos \cite{Ro} and Quillen \cite{Q}, MacLane cohomology of rings \cite{JP}, \cite{ML}, \cite{pw} and the cohomology of algebraic theories \cite{JP}.

In this article we analyze the functoriality of Baues-Wirsching cohomology with respect to functors between small categories. Namely, for a functor $u: \Ee\rightarrow \Bb$, we construct natural spectral sequences relating the Baues-Wirsching cohomology of the category $\Ee$ with that of the category $\Bb$ and associated local fiber data. We derive a first quadrant cohomology spectral sequence of the form
$$E_2^{p,q}\cong H_{BW}^p(\Bb, \h^q(-/Fu, D\circ Q^{(-)}))\Rightarrow H_{BW}^{p+q}(\Ee, D)$$
which is functorial with respect to natural transformations and where  
$$\h^q(-/Fu, D\circ Q^{(-)})= \llim^q_{-/Fu}(D\circ Q^{(-)}): F\Bb\rightarrow \Aa$$
is a particular natural system keeping track of the fiber data. We also introduce the dual concept of Baues-Wirsching homology of small categories and derive a homology spectral sequence dual to the cohomology spectral sequence above. Specializing to particular coefficient systems, for example given by bimodules, modules, local or trivial systems, then induces spectral sequences for Hochschild-Mitchell cohomology and homology and for the other classical cohomology and homology theories of small categories. An interpretation of the cohomology of algebraic theories in terms of Baues-Wirsching cohomology due to Jibladze and Pirashvili \cite{JP} allows us also to study morphisms of algebraic theories cohomologically.

The main ingredient in the construction of these spectral sequences is the Andr\'e spectral sequence \cite{An}. Given a functor between small categories, Andr\'e constructed a spectral sequence for the cohomology and homology of the categories involved. Using an interpretation of the Baues-Wirsching cohomology and homology groups as particular Ext and Tor groups in appropriate functor categories we extend the Andr\'e spectral sequences to the context of Baues-Wirsching theories and unify the existing spectral sequence constructions for cohomology and homology of categories studied before. Special cases of these spectral sequences were also constructed by Cegarra \cite{Ce} and recently by Robinson \cite{Rb} to study the functoriality of cohomology of diagrams of groups and algebras.

In the particular case that the functor $u: \Ee\rightarrow \Bb$ is a Grothendieck fibration of small categories, the local categorical data can be identified more explicitly in terms of fiber data. The cohomology spectral sequence for a Grothendieck fibration turns out to be equivalent to the one constructed by Pirashvili and Redondo \cite{pr}, which converges to the cohomology of the Grothendieck construction of a pseudofunctor. We again analyze several special cases, including a general Cartan-Leray type spectral sequence for categorical group actions. In the particular case of bimodule coefficients such a Cartan-Leray type spectral sequence was also studied by Ciblis and Redondo \cite{CR}, \cite{pr} for Galois coverings of $k$-linear categories, which proves to be an important calculational tool for the representation theory of finite-dimensional algebras.

In a sequel to this paper we aim to study cohomology and homology theories 
with even more general coefficient systems, due to Thomason \cite{Wei2}, 
given by functors from the comma category $\Delta/\mathscr{C}$ 
into abelian groups or more generally into a model category, 
where $\mathscr{C}$ is the given small category 
and $\Delta$ the simplex category.  
These will allow for the construction of spectral sequences 
generalizing the classical Bousfield-Kan
homotopy limit and colimit spectral sequences.

\section{Constructing spectral sequences for functors}
\subsection{Constructing Andr\'e spectral sequences} 

In  this section we review and unify the constructions of various spectral sequences for functors between small categories. 

For a given small category $\Cc$ and an abelian category $\Aa$, for example if $\Aa=\Ab$ is the category of abelian groups, we can do homological algebra in the functor category $\Fun(\Cc, \Aa)$. It turns out that $\Fun(\Cc, \Aa)$ is again an abelian category with enough projective objects and by a theorem of Grothendieck \cite{G} also with enough injective objects and so the classical functorial constructions of homological algebra like $\Ext$ and $\Tor$  can be performed in $\Fun(\Cc, \Aa)$ (see \cite{Fr}, \cite{G}). 

Let us first recall the definitions of the cohomology and homology of a small category via derived limits and colimits, and the construction of the Andr\'e spectral sequences for general functors between small categories (see \cite{An}, \cite{Hu}, and \cite{Ob}). 

\begin{definition} Let $\Cc$ be a small category and $F:\Cc\rightarrow \Aa$ a functor to an abelian category $\Aa$. 
\begin{itemize}
\item [(i)] Assume $\Aa$ is complete and has exact products. The {\it $n$-th  cohomology of $\Cc$ with coefficients in $F$} is defined as:
$$H^n(\Cc, F)=\llim^{n}_{\Cc} F.$$
\item [(ii)] Assume $\Aa$ is cocomplete and has exact coproducts. The {\it $n$-th homology of $\Cc$ with coefficients in $F$} is defined as:
$$H_n(\Cc, F)=\clim_{n}^{\Cc} F.$$
\end{itemize}
\end{definition}

Now let $u: \Ee\rightarrow \Bb$ be a functor between two small categories $\Ee$ and $\Bb$ and $F:\Bb\rightarrow \Aa$ a functor from $\Bb$ to any category  $\Aa$. We get an induced functor between functor categories
$$u^*: \Fun(\Bb, \Aa)\rightarrow \Fun(\Ee, \Aa), \,\, u^*(F)=F\circ u.$$
If the category $\Aa$ is complete then the functor $u^*$ has a right adjoint functor, the right Kan extension,
$$Ran_u: \Fun(\Ee, \Aa)\rightarrow \Fun(\Bb, \Aa).$$ 
If $\Aa$ is moreover an abelian category with exact products, then $Ran_u$ has right satellites (see \cite{CE}),
$$R^qu_*:=Ran_u^q: \Fun(\Ee, \Aa)\rightarrow \Fun(\Bb, \Aa).$$
In this general situation we have the following cohomology spectral sequence due to Andr\'e \cite{An} (see also \cite{Hu}, \cite{Ob}).

\begin{theorem}\label{H^*andre}
Let $u: \Ee\rightarrow \Bb$ be a functor between small categories and $F:\Ee\rightarrow \Aa$ a functor to a complete abelian category $\Aa$ with exact products.
Then there exists a first quadrant spectral sequence of the form
$$E_2^{p,q}\cong H^p(\Bb, (R^qu_*)(F))\Rightarrow H^{p+q}(\Ee, F)$$
which is functorial with respect to natural transformations, where $R^qu_*=Ran^q_{u}$ is the $q$-th right satellite of  $Ran_{u}$, the right Kan extension
along the functor $u$.
\end{theorem}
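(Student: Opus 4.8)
The plan is to realize this as a Grothendieck spectral sequence for a composite of derived functors. First I would observe that the cohomology $H^n(\Cc,-)=\llim^n_\Cc$ is the $n$-th right derived functor of the limit functor $\llim_\Cc\colon\Fun(\Cc,\Aa)\to\Aa$, and that this limit functor is precisely the right Kan extension $Ran_{p_\Cc}$ along the unique functor $p_\Cc\colon\Cc\to\one$ to the terminal category, under the identification $\Fun(\one,\Aa)\cong\Aa$. Since $u$ satisfies $p_\Ee=p_\Bb\circ u$, the restriction functors compose as $p_\Ee^*=u^*\circ p_\Bb^*$; passing to right adjoints reverses the order and yields the factorization
$$\llim_\Ee \;=\; Ran_{p_\Ee} \;=\; Ran_{p_\Bb}\circ Ran_u \;=\; \llim_\Bb\circ Ran_u$$
of the limit over $\Ee$ through the limit over $\Bb$.

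With this factorization in hand, I would apply the Grothendieck spectral sequence for the composite of the two left exact additive functors $Ran_u\colon\Fun(\Ee,\Aa)\to\Fun(\Bb,\Aa)$ and $\llim_\Bb\colon\Fun(\Bb,\Aa)\to\Aa$. Both functor categories have enough injectives by Grothendieck's theorem, so the machinery applies once the acyclicity hypothesis is verified. The spectral sequence then reads
$$E_2^{p,q} = \bigl(R^p\llim_\Bb\bigr)\bigl((R^q Ran_u)(F)\bigr) \Rightarrow R^{p+q}\bigl(\llim_\Bb\circ Ran_u\bigr)(F),$$
and rewriting each term via the identifications $R^p\llim_\Bb=H^p(\Bb,-)$, $R^q Ran_u=Ran_u^q=R^qu_*$ and $R^{p+q}\llim_\Ee=H^{p+q}(\Ee,-)$ gives exactly the asserted $E_2$-page and abutment. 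First-quadrant convergence is automatic, since both derived functors vanish in negative degrees.

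The step I expect to be the crux is verifying the acyclicity hypothesis of the Grothendieck spectral sequence, namely that $Ran_u$ sends injective objects of $\Fun(\Ee,\Aa)$ to $\llim_\Bb$-acyclic objects of $\Fun(\Bb,\Aa)$. Here I would use the adjunction $u^*\dashv Ran_u$ together with the observation that $u^*$ is exact, because limits and colimits in functor categories are computed objectwise and $u^*$ is mere precomposition along $u$. A right adjoint of an exact functor preserves injectives, so $Ran_u$ carries injectives to injectives, which are in particular acyclic for the left exact functor $\llim_\Bb$; this is more than enough. Finally, functoriality of the spectral sequence with respect to natural transformations is inherited from the naturality of the Grothendieck construction in the coefficient functor $F$, completing the argument.
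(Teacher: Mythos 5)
Your proposal is correct and follows essentially the same route as the paper: the theorem is quoted from Andr\'e (see also Husainov and Oberst), and the paper's own subsequent use of it rests on exactly your factorization $\llim_{\Ee}=\llim_{\Bb}\circ\, Ran_u$ of derived limit functors, with the spectral sequence arising as the Grothendieck spectral sequence of this composite and the acyclicity hypothesis settled, as you do, by noting that $Ran_u$ preserves injectives since its left adjoint $u^*$ is exact. Your appeal to Grothendieck's theorem for enough injectives in $\Fun(\Ee,\Aa)$ is the same appeal the paper makes in its standing assumptions, so no gap remains.
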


For every functor $u: \Ee\rightarrow \Bb$ and object $b\in \Bb$ let 
$Q^b: b/u\rightarrow \Ee$ be the forgetful functor from the comma category $b/u$ to $\Ee$. 
We can identify the $q$-th right satellite $R^qu_*=Ran^q_{u}$ in terms of the comma category as follows (this is the dual statement of \cite[Application 2]{GZ}):
$$(Ran^q_{u}F)(b)\cong \llim^q_{b/u}(F\circ Q^b).$$

These isomorphisms are natural in objects $b$ and functors $F$ of $\Fun(\Ee, \Aa)$. 
Therefore we can identify the $E_2$-term of the Andr\'e spectral sequence and obtain the following:

\begin{corollary}\label{H^*andre+GZ}
Let $u: \Ee\rightarrow \Bb$ be a functor between small categories and $F:\Ee\rightarrow \Aa$ a functor to a complete abelian category $\Aa$ with exact products. Then there exists a first quadrant spectral sequence of the form
$$E_2^{p,q}\cong H^p(\Bb, \h^q(-/u, F\circ Q^{(-)}))\Rightarrow H^{p+q}(\Ee, F)$$
which is functorial with respect to natural transformations and where  
$$\h^q(-/u, F\circ Q^{(-)})= \llim^q_{-/u}(F\circ Q^{(-)}): \Bb\rightarrow \Aa.$$
\end{corollary}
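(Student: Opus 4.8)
The plan is to derive the statement directly from Theorem~\ref{H^*andre} by reinterpreting its $E_2$-term via the comma-category identification of the right satellites. Since Theorem~\ref{H^*andre} already provides a functorial first quadrant spectral sequence $E_2^{p,q}\cong H^p(\Bb,(R^qu_*)(F))\Rightarrow H^{p+q}(\Ee,F)$, and since both the abutment $H^{p+q}(\Ee,F)$ and the outer cohomology functor $H^p(\Bb,-)$ are to be kept fixed, all that remains is to rewrite the coefficient system $(R^qu_*)(F)$ appearing in the $E_2$-page as the natural system $\h^q(-/u,F\circ Q^{(-)})$.

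First I would promote the pointwise isomorphisms $(Ran^q_u F)(b)\cong\llim^q_{b/u}(F\circ Q^b)$, which are the dual of \cite[Application~2]{GZ}, to an isomorphism of functors $\Bb\rightarrow\Aa$. The right-hand sides indeed organise into a functor on $\Bb$: a morphism $\beta\colon b\rightarrow b'$ induces by precomposition a functor $b'/u\rightarrow b/u$, hence a comparison map between the derived limits $\llim^q_{b/u}(F\circ Q^b)$, and this is precisely the structure packaged in $\h^q(-/u,F\circ Q^{(-)})$. Granting, as recorded immediately before the statement, that these isomorphisms are natural in $b$, we obtain an isomorphism $(R^qu_*)(F)\cong\h^q(-/u,F\circ Q^{(-)})$ in the functor category $\Fun(\Bb,\Aa)$.

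Next, since $H^p(\Bb,-)=\llim^p_{\Bb}$ is a derived functor on $\Fun(\Bb,\Aa)$, it carries this isomorphism of coefficient systems to an isomorphism $H^p(\Bb,(R^qu_*)(F))\cong H^p(\Bb,\h^q(-/u,F\circ Q^{(-)}))$ of $E_2$-terms. As this is a natural isomorphism of natural systems it induces an isomorphism of the entire spectral sequence from the $E_2$-page onward, compatible with the unchanged abutment $H^{p+q}(\Ee,F)$, which yields the asserted spectral sequence with the stated $E_2$-page. For functoriality with respect to natural transformations $F\Rightarrow F'$, I would combine the functoriality already asserted in Theorem~\ref{H^*andre} with the naturality of the comma-category identification in the variable $F$, so that the induced maps on $E_2$-pages commute with the identifications.

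The main obstacle is the verification that the pointwise isomorphism is genuinely natural in $b$, namely that it intertwines the restriction maps making $(R^qu_*)(F)$ a functor on $\Bb$ with the precomposition maps on comma categories defining $\h^q(-/u,F\circ Q^{(-)})$. Once this naturality is in hand, the remainder is a formal transport of structure along an isomorphism in $\Fun(\Bb,\Aa)$, and the corollary follows.
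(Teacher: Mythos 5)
Your proposal follows essentially the same route as the paper: the corollary is obtained there by combining Theorem~\ref{H^*andre} with the dual of \cite[Application~2]{GZ}, namely the isomorphisms $(Ran^q_{u}F)(b)\cong \llim^q_{b/u}(F\circ Q^b)$, which the paper asserts are natural in both $b$ and $F$, and then substituting into the $E_2$-page. Your write-up is correct and in fact makes explicit the one point the paper leaves implicit, the verification that the pointwise identification intertwines the structure maps in $b$ (induced by the precomposition functors $b'/u\rightarrow b/u$) with the functoriality of $R^qu_*$.
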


We can also consider the dual situation. If the abelian category $\Aa$ is cocomplete then the functor $u^*$ has a left adjoint functor, the left Kan extension,
$$Lan^u: \Fun(\Ee, \Aa)\rightarrow \Fun(\Bb, \Aa).$$ 
If $\Aa$ is moreover an abelian category with exact coproducts then $Lan^u$ has left satellites (see \cite{CE}),
$$L_qu_*:=Lan^u_q: \Fun(\Ee, \Aa)\rightarrow \Fun(\Bb, \Aa),$$
and we get the following homology Andr\'e spectral sequence:

\begin{theorem}\label{H_*andre}
Let $u: \Ee\rightarrow \Bb$ be a functor between small categories and $F:\Ee\rightarrow \Aa$  a functor to a cocomplete abelian category $\Aa$ with exact coproducts.
Then there exists a third quadrant spectral sequence of the form
$$E^2_{p,q}\cong H_p(\Bb, (L_qu_*)(F))\Rightarrow H_{p+q}(\Ee, F)$$
which is functorial with respect to natural transformations, where $L_q u_*=Lan_q^{u}$ is the $q$-th left satellite of $Lan^{u}$, the left Kan extension along the functor $u$.
\end{theorem}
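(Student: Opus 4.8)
The plan is to prove Theorem~\ref{H_*andre} as the homological dual of Theorem~\ref{H^*andre}: instead of factoring the limit functor through $Ran_u$, I would factor the colimit functor through $Lan^u$ and invoke the Grothendieck spectral sequence for the left derived functors of a composite. Recall that $H_n(\Ee,-)=\clim_n^{\Ee}$ is by definition the $n$-th left derived functor of the colimit $\clim^{\Ee}\colon\Fun(\Ee,\Aa)\to\Aa$, and likewise for $\Bb$. Two things are needed to run such an argument: first, that the relevant functor categories admit the derived functors at all, which holds here since $\Fun(\Ee,\Aa)$ and $\Fun(\Bb,\Aa)$ are abelian with enough projectives, as recalled at the start of this section, $\Aa$ being cocomplete with exact coproducts; and second, an acyclicity hypothesis discussed below.

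First I would establish the factorization $\clim^{\Ee}\cong\clim^{\Bb}\circ Lan^u$ at the level of underived functors. This is formal: writing $p_{\Cc}\colon\Cc\to\one$ for the functor to the terminal category, one has $\clim^{\Cc}\cong Lan^{p_{\Cc}}$, and since $Lan$ is pseudofunctorial in the functor variable, $Lan^{p_{\Bb}}\circ Lan^u\cong Lan^{p_{\Bb}\circ u}=Lan^{p_{\Ee}}$. Equivalently, this isomorphism records that $\clim^{\Bb}\circ Lan^u$ and $\clim^{\Ee}$ are both left adjoint to the constant-diagram functor $\Aa\to\Fun(\Ee,\Aa)$, since $Lan^u\dashv u^*$, $\clim^{\Bb}\dashv\Delta_{\Bb}$, and $u^*\circ\Delta_{\Bb}=\Delta_{\Ee}$; uniqueness of adjoints then gives the identification.

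The step I expect to be the only real subtlety is the Grothendieck acyclicity hypothesis: $Lan^u$ must carry projective objects of $\Fun(\Ee,\Aa)$ to $\clim^{\Bb}$-acyclic objects of $\Fun(\Bb,\Aa)$. Here I would use that $Lan^u$ is left adjoint to the restriction functor $u^*$, and that $u^*$ is exact, because it is precomposition with $u$ while kernels and cokernels in functor categories are computed pointwise, so $u^*$ preserves them. Exactness of the right adjoint $u^*$ forces the left adjoint $Lan^u$ to preserve projectives: for projective $P$, the functor $\Hom\bigl(Lan^u(P),-\bigr)\cong\Hom\bigl(P,u^*(-)\bigr)$ is a composite of exact functors, hence exact. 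Projectives are in particular $\clim^{\Bb}$-acyclic, since $\clim^{\Bb}$ is right exact with left derived functors $H_*(\Bb,-)$ vanishing in positive degrees on projectives, so the hypothesis is satisfied.

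With these ingredients in place I would apply the (homological) Grothendieck spectral sequence to the composite $\clim^{\Bb}\circ Lan^u\cong\clim^{\Ee}$ evaluated at $F\in\Fun(\Ee,\Aa)$. Writing $L_pT=H_p(\Bb,-)$ for $T=\clim^{\Bb}$, $L_q(Lan^u)=L_qu_*$, and $L_{p+q}(\clim^{\Ee})=H_{p+q}(\Ee,-)$, this produces exactly the third quadrant spectral sequence
$$E^2_{p,q}\cong H_p\bigl(\Bb,(L_qu_*)(F)\bigr)\Rightarrow H_{p+q}(\Ee,F).$$
Functoriality with respect to natural transformations of the input follows from the naturality of the whole construction: the comparison map, the factorization, and the derived-functor resolutions are all natural. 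Alternatively, the entire argument is the term-by-term dual of the proof of Theorem~\ref{H^*andre}, obtained by reversing arrows and exchanging $Ran_u$, injectives and limits for $Lan^u$, projectives and colimits.
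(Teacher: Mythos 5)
Your argument is correct, and it is essentially the proof the paper itself relies on: the paper does not prove Theorem~\ref{H_*andre} but quotes it from Andr\'e (with Husainov and Oberst as further references), and the underlying argument there is exactly your route --- the factorization $\clim^{\Ee}\cong\clim^{\Bb}\circ Lan^u$ by uniqueness of left adjoints to $u^*\circ\Delta_{\Bb}=\Delta_{\Ee}$, exactness of the restriction $u^*$ forcing its left adjoint $Lan^u$ to preserve projectives, and the homological Grothendieck composite-functor spectral sequence, dual in every step to the cohomological case of Theorem~\ref{H^*andre}. One caveat deserves to be made explicit, since you locate the ``only real subtlety'' in the acyclicity condition when it actually lies one step earlier: your appeal to enough projectives in $\Fun(\Ee,\Aa)$ is exactly as strong as the paper's blanket claim at the start of Section~1, and that claim genuinely requires $\Aa$ itself to have enough projectives --- a cocomplete abelian category with exact coproducts (for instance a Grothendieck category of sheaves) may have no nonzero projectives at all, in which case $\Fun(\Ee,\Aa)$ has none either. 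In that full generality the correct reading, consistent with the paper's citation of Cartan--Eilenberg for the existence of the left satellites $Lan^u_q$, is to construct $\clim_q$ and $Lan^u_q$ from Andr\'e's explicit bar-type resolutions, whose terms (coproducts of corepresentably induced functors) are both $Lan^u$-acyclic and $\clim^{\Bb}$-acyclic; the composite-functor argument then runs verbatim with these resolutions in place of projective ones. So the gap is inherited from the paper's standing assumptions rather than introduced by you, and under those assumptions your proof is complete.
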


For every functor $u: \Ee\rightarrow \Bb$ and every given object $b\in \Bb$ let 
$Q_b: u/b\rightarrow \Ee$ be now the forgetful functor from the comma category $u/b$ to $\Ee$. 
We can then identify the $q$-th left satellite $L_qu_*=Lan_q^{u}$ as follows (see \cite[Application 2]{GZ}):
$$(Lan_q^{u}F)(b)\cong \clim_q^{u/b}(F\circ Q_b).$$

These isomorphisms are again natural in objects $b$ and functors $F$ of $\Fun(\Ee, \Aa)$ and we can identify the $E_2$-term of the above spectral sequence and obtain:

\begin{corollary}\label{H_*andre+GZ}
Let $u: \Ee\rightarrow \Bb$ be a functor between small categories, $\Aa$ a cocomplete abelian category with exact coproducts and $F:\Ee\rightarrow \Aa$ a functor. Then there exists a third quadrant spectral sequence of the form
$$E_2^{p,q}\cong H_p(\Bb, \h_q(u/-, F\circ Q_{(-)}))\Rightarrow H_{p+q}(\Ee, F)$$
which is functorial with respect to natural transformations and where  
$$\h_q(u/-, F\circ Q_{(-)})= \clim_q^{u/-}(F\circ Q_{(-)}): \Bb\rightarrow \Aa.$$
\end{corollary}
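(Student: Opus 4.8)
The plan is to obtain this statement by substituting the comma-category identification of the left satellite into the $E_2$-term of the homology Andr\'e spectral sequence of Theorem \ref{H_*andre}, exactly dualising the passage from Theorem \ref{H^*andre} to Corollary \ref{H^*andre+GZ}. Theorem \ref{H_*andre} already supplies a third quadrant spectral sequence $E^2_{p,q}\cong H_p(\Bb, (L_qu_*)(F))\Rightarrow H_{p+q}(\Ee, F)$, so the only task is to rewrite its $E_2$-term using the stated identification and to check that functoriality survives the rewriting.

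First I would record that the isomorphism $(Lan_q^{u}F)(b)\cong \clim_q^{u/b}(F\circ Q_b)$, by its provenance in \cite[Application 2]{GZ}, is natural in the object $b\in\Bb$. Concretely, a morphism $\beta: b\rightarrow b'$ in $\Bb$ induces a functor $u/b\rightarrow u/b'$ between the comma categories by post-composition with $\beta$, and this functor is compatible with the forgetful functors $Q_b$ and $Q_{b'}$ to $\Ee$. Passing to the $q$-th derived colimit of $F\circ Q_{(-)}$ therefore yields a map $\clim_q^{u/b}(F\circ Q_b)\rightarrow\clim_q^{u/b'}(F\circ Q_{b'})$, which exhibits the assignment $b\mapsto \clim_q^{u/b}(F\circ Q_b)$ as a genuine functor $\h_q(u/-, F\circ Q_{(-)})=\clim_q^{u/-}(F\circ Q_{(-)}): \Bb\rightarrow \Aa$.

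Next I would use the naturality in $b$ to conclude that the family of isomorphisms assembles into an isomorphism of functors $L_qu_*(F)\cong \h_q(u/-, F\circ Q_{(-)})$ in $\Fun(\Bb, \Aa)$. Since isomorphic coefficient functors have canonically isomorphic homology, this gives $H_p(\Bb, (L_qu_*)(F))\cong H_p(\Bb, \h_q(u/-, F\circ Q_{(-)}))$ for all $p$ and $q$, and substituting into the $E_2$-page of Theorem \ref{H_*andre} produces the desired spectral sequence with convergence to $H_{p+q}(\Ee, F)$.

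Finally, for functoriality with respect to natural transformations, I would note that the Andr\'e spectral sequence of Theorem \ref{H_*andre} is already functorial and that the identification of the satellite in \cite{GZ} is natural in the functor $F$ as well, so a natural transformation of input functors induces compatible maps on both the satellites and the derived colimits $\clim_q^{u/-}(F\circ Q_{(-)})$. I expect the only genuinely delicate point to be verifying this double naturality---simultaneously in the object $b$ and in the functor $F$---of the comma-category identification, since everything else is a formal translation of the dual cohomological argument of Corollary \ref{H^*andre+GZ}.
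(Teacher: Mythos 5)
Your proposal is correct and follows essentially the same route as the paper: the paper likewise obtains this corollary by invoking the identification $(Lan_q^{u}F)(b)\cong \clim_q^{u/b}(F\circ Q_b)$ from Gabriel--Zisman, noting its naturality in both the object $b$ and the functor $F$, and substituting into the $E_2$-term of Theorem \ref{H_*andre}. The double naturality you flag as the delicate point is exactly what the paper asserts (without further elaboration) to justify the rewriting, so your argument is a faithful, slightly more detailed version of the paper's own.
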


\subsection{Constructing spectral sequences for Baues-Wirsching cohomology and homology}

Let us now recall the definition of the Baues-Wirsching cohomology of a small category and its main properties \cite{BW}. Dually, we will also introduce the notion of Baues-Wirsching homology.

\begin{definition} Let $\Cc$ be a small category. The {\it factorization category} $F\Cc$ of $\Cc$ is the category whose object set is the set of morphisms of $\Cc$ and whose Hom-sets $F\Cc(f, f')$ are the sets of pairs $(\alpha, \beta)$ such that $f'=\beta f\alpha$,
 $$
\xymatrix{b\rto^\beta &b'\\
a\uto^{f}& a'\uto_{f'}\lto^\alpha.}
$$
The composition of morphisms in the factorization category $F\Cc$ is defined by $$(\alpha', \beta')\circ (\alpha, \beta)= (\alpha\circ \alpha', \beta'\circ \beta).$$
\end{definition}

We will introduce the following general coefficient systems:

\begin{definition}\label{def:structure}
Let $\Mm$ be a category. A functor $D: F\Cc\rightarrow \Mm$ is called a {\it natural system with values in} $\Mm$. For $\alpha,f,\beta$ as above we write 
\begin{align}\label{structure}
\alpha^*&=D(\alpha,1):D(f)\to D(f\circ\alpha)\\
\beta_*&=D(1,\beta):D(f)\to D(\beta\circ f).
\end{align}
\end{definition}

The factorization category construction is functorial on the category $\Cat$ of small categories. The functor
$$F: \Cat\rightarrow \Cat$$
is given on morphisms as follows: if $u: \Ee\rightarrow \Bb$ is a functor then
$Fu:F\Ee\rightarrow F\Bb$ is the functor given on objects by $Fu(f)=u(f)$ and on morphisms by
$Fu(\alpha, \beta)=(u(\alpha), u(\beta))$. 

There is a functor $$F\Cc\to\Cc^{\mathrm{op}}\times\Cc$$ given by $(a\stackrel f\longrightarrow b)\mapsto(a,b)$ and $(\alpha,\beta)\mapsto(\alpha,\beta)$. This construction is natural in $\Cc$. 

We define now the Baues-Wirsching cohomology of a small category as follows:

\begin{definition}\label{bwcochn}
Let $\Cc$ be a small category and let $D: F\Cc\rightarrow \Aa$ be a natural system with values in a complete abelian category $\Aa$ with exact products. 
We define the {\it Baues-Wirsching cochain complex} $C_{BW}^*(\Cc, D)$ as follows: For each integer $n\geq 0$ the $n$-th cochain object is given by
$$C_{BW}^n(\Cc, D)= \prod_{C_0\stackrel{f_1}\leftarrow C_1\stackrel{f_2}\leftarrow\cdots \stackrel{f_n}\leftarrow C_n} D(f_1\circ f_2\circ \cdots\circ f_n).$$
Let $N_{\bullet}\Cc$ be the nerve of $\Cc$. Considering elements of $C_{BW}^n(\Cc, D)$ as maps $\sigma: N_n\Cc\rightarrow \bigcup_{f\in Mor(\Cc)}D(f)$ with 
$\sigma(f_1, \ldots, f_n)\in D(f_1\circ\cdots\circ f_n)$, where $\sigma(1_C)\in D(1_C)$ for $n=0$, the differential 
$$d : C_{BW}^n(\Cc,D) \to C_{BW}^{n+1}(\Cc,D)$$
is given by
$$\begin{array}{ll}
(d\sigma)(f_1, \cdots, f_{n+1}) &= (f_1)_* \sigma(f_2,
\cdots, f_{n+1}) \\ 
& + \sum_{i=1}^n (-1)^i \sigma( f_1, \cdots, f_i\circ
f_{i+1}, \cdots, f_{n+1}) \\
& + (-1)^{n+1} (f_{n+1})^* \sigma(f_1, \cdots, f_{n}).
\end{array}$$
The {\it $n$-th Baues-Wirsching cohomology} is defined as
$$H^n_{BW}(\Cc, D)=H^n(C_{BW}^*(\Cc, D), d).$$
\end{definition}

Dually, we can define the Baues-Wirsching homology of a small category. 

\begin{definition}
Let $\Cc$ be a small category and let $D: F\Cc\rightarrow \Aa$ be a natural system with values in a cocomplete abelian category $\Aa$ with exact coproducts. 
We define the {\it Baues-Wirsching chain complex} $C^{BW}_*(\Cc, D)$ as follows: For each integer $n\geq 0$ the $n$-th chain object is given by
$$C^{BW}_n(\Cc, D)= \bigoplus_{C_0\stackrel{f_1}\leftarrow C_1\stackrel{f_2}\leftarrow\cdots \stackrel{f_n}\leftarrow C_n} D(f_1\circ f_2\circ \cdots\circ f_n).$$

Let $\lambda=(C_0\stackrel{f_1}\leftarrow C_1\stackrel{f_2}\leftarrow\cdots \stackrel{f_n}\leftarrow C_n)$ be a string of $n$ composable arrows and let $in_{\lambda}: D(\lambda)\hookrightarrow C^{BW}_n(\Cc, D)$ be the inclusion. We first define
$$d_i: C^{BW}_n(\Cc, D)\rightarrow C^{BW}_{n-1}(\Cc, D), \,\,\, 0\leq i \leq n.$$
by setting
\[
d_i\circ in_{\lambda}=
\begin{cases} 
in_{d_0\lambda}\circ D(id_{C_0}\circ f_n), & \text{if } i=0,\\
in_{d_i\lambda}, & \text{if } 0<i<n,\\
in_{d_n\lambda}\circ D(f_0\circ id_{C_n}), & \text{if } i=n.
\end{cases}
\]
Here $d_i\lambda$ is the $i$-th face of the string in the nerve $N_{\bullet}\Cc$. The differential $$d:C^{BW}_n(\Cc, D)\rightarrow C^{BW}_{n-1}(\Cc, D)$$ is given by the alternating sum
$$d=\sum_{i=0}^n (-1)^i d_i.$$
The {\it $n$-th Baues-Wirsching homology} is defined as
$$H_n^{BW}(\Cc, D)=H_n(C^{BW}_*(\Cc, D), d).$$
\end{definition}

\begin{remark} 
It follows from the definition, that even though the coefficient systems are more general, Baues-Wirsching cohomology can also be interpreted in terms of classical cohomology of small categories over the factorization category $F\Cc$, namely we have an isomorphism
$$H^n_{BW}(\Cc, D)\cong H^n(F\Cc, D).$$
In fact, we can identify Baues-Wirsching cohomology as the derived functors of the limit functor~\cite[Theorem 4.4 and Remark 8.7]{BW}, 
$$H^n_{BW}(\Cc, D)\cong \Ext^n_{F\Cc}(\Z, D)\cong\llim_{F\Cc}^n D,$$
where $\Z: F\Cc\rightarrow \Aa$ is the constant natural system on $\Cc$. 

Dually, we can identify Baues-Wirsching homology as the derived functors of the colimit functor 
$$H_n^{BW}(\Cc, D)\cong \Tor_n^{F\Cc}(\Z, D)\cong\clim^{F\Cc}_n D$$
and therefore get again an isomorphism
$$H^{BW}_n(\Cc, D)\cong H_n(F\Cc, D).$$ 
The Baues-Wirsching cochain complex can also be seen naturally as a functor from the category $\Nat$ of natural systems given by all pairs $(\Cc, D)$, where $\Cc$ is a small category and $D$ a natural systems into the category of cochain complexes. Similarly Baues-Wirsching cohomology gives an induced functor from $\Nat$ into the category of graded abelian groups $\Ab$. 

In particular let us note that any equivalence of small categories $\phi: \Cc'\rightarrow \Cc$ induces an isomorphism of Baues-Wirsching cohomology groups (see \cite[Theorem 1.11]{BW}):
$$\phi^*: H^n_{BW}(\Cc, D)\stackrel{\cong}\rightarrow H^n_{BW}(\Cc', \phi^*D).$$

The Baues-Wirsching cochain complex can further be extended to give a $2$-functor from an extended $2$-category of natural systems into the $2$-category of cochain complexes such that the Baues-Wirsching cohomology factors through a quotient of $\Nat$ \cite{Mu}.

Dually, analogous functoriality properties can be obtained for the Baues-Wirsching chain complex and the associated Baues-Wirsching homology of a small category.
\end{remark}

Baues-Wirsching cohomology and homology also generalize many existing cohomology and homology theories whose coefficient systems can be seen as particular cases of natural systems. 

Let us recall the most important examples here (see also \cite{BW}). We have the following general diagram of categories and functors, in which $\Cc$ is a given small category:
$$F\Cc \stackrel{\pi}\longrightarrow\Cc^{op}\times \Cc\stackrel{p}\longrightarrow \Cc\stackrel{q}\longrightarrow\pi_1\Cc\stackrel{t}\longrightarrow \mathbb{1}$$
In this diagram, $\pi$ and $p$ are the forgetful functors and $q$ is the localization functor into the fundamental groupoid $\pi_1\Cc=(\Mor \Cc)^{-1}\Cc$ of $\Cc$ (see \cite{GZ}). Furthermore, $\mathbb{1}$ is the the trivial category consisting of one object and one morphism and $t$ is the trivial functor. 

Now let $\Aa$ be a
(co)complete abelian category with exact (co)products, as above. Pulling back functors from $\Fun(\Cc', \Aa)$ via the functors in the above diagram, where $\Cc'$ is any one of the categories in the diagram, induces natural systems on the category $\Cc$ and we can define various versions of cohomology and homology of small categories, which can all be seen as special cases of Baues-Wirsching cohomology or homology in this way (see \cite[Definition 1.18]{BW}).

\begin{definition} 
Let $\Aa$ be a complete abelian category with exact products for cohomology, or a cocomplete abelian category with exact coproducts for homology. Then:
\begin{itemize}
\item[(i)] $M$ is called a {\it $\Cc$-bimodule} if $M$ is a functor of $\Fun(\Cc^{op}\times \Cc, \Aa)$. Define the cohomology $H_{HM}^*(\Cc, M)=H^*_{BW}(\Cc, \pi^*M)$ and dually the homology $H^{HM}_*(\Cc, M)=H_*^{BW}(\Cc, \pi^*M)$.
\item[(ii)] $F$ is called a {\it $\Cc$-module} if $F$ is a functor of $\Fun(\Cc, \Aa)$. Define the cohomology $H^*(\Cc, F)=H^*_{BW}(\Cc, \pi^*p^*F)$ and dually the homology $H_*(\Cc, F)=H_*^{BW}(\Cc, \pi^*p^*F)$.
\item[(iii)] $L$ is called a {\it local system} on $\Cc$ if $L$ is a functor of $\Fun(\pi_1\Cc, \Aa)$. Define the cohomology $H^*(\Cc, L)=H^*_{BW}(\Cc, \pi^*p^*q^*L)$ and dually the homology $H_*(\Cc, L)=H_*^{BW}(\Cc, \pi^*p^*q^*L)$.
\item[(iv)] $A$ is a {\it trivial system} on $\Cc$ if $A$ is an abelian group (resp. an object in $\Aa$), i.~e. a functor of $\Fun(\mathbb{1}, \Ab)$ (resp. a functor of 
$\Fun(\mathbb{1}, \Aa)$). Define the cohomology $H^*(\Cc, A)=H^*_{BW}(\Cc, \pi^*p^*q^*t^*A)$ and dually the homology $H_*(\Cc, A)=H_*^{BW}(\Cc, \pi^*p^*q^*t^*A)$.
\end{itemize}
\end{definition}

These various cohomology and homology theories can be identified with the ones known previously in the literature. We have the following result (see \cite[Section 8]{BW}).

\begin{proposition} Let $\Aa$ be a complete abelian category with exact products. Then:
\begin{itemize}
\item[(i)] For any $\Cc$-bimodule $M$ of $\Fun(\Cc^{op}\times \Cc, \Aa)$ we have:\\
$H_{HM}^n(\Cc, M)\cong \Ext_{F\Cc}^n(\Z, \pi^*M)\cong\Ext^n_{\Cc^{op}\times \Cc}(\Z\Cc, M).$
\item[(ii)] For any $\Cc$-module $F$ of $\Fun(\Cc, \Aa)$ we have:\\
$H^n(\Cc, F)\cong \Ext_{F\Cc}^n(\Z, \pi^*p^*F)\cong\Ext^n_{\Cc}(\Z, F)\cong{\lim}^n_{\Cc} F.$
\item[(iii)] For any local system $L$ on $\Cc$ of $\Fun(\pi_1\Cc, \Aa)$ we have:\\
$H^n(\Cc, L)\cong \Ext_{F\Cc}^n(\Z, \pi^*p^*q^*L)\cong \Ext^n_{\pi_1\Cc}(\Z, L)\\\hspace*{1.7cm}\cong {\lim}^n_{\pi_1\Cc}L\cong H^n(B\Cc, L)$\\
where $B\Cc$ is the classifying space of the small category $\Cc$.
\end{itemize}
\end{proposition}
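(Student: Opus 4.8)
The plan is to reduce all three parts to the identification recorded in the Remark, namely $H^n_{BW}(\Cc,D)\cong\Ext^n_{F\Cc}(\Z,D)\cong\llim^n_{F\Cc}D$, together with a single change-of-base principle applied to the forgetful functors out of $F\Cc$. First I would observe that the leftmost isomorphism of each of (i)--(iii) is immediate: by definition $H^n_{HM}(\Cc,M)$, $H^n(\Cc,F)$ and $H^n(\Cc,L)$ are the Baues--Wirsching cohomology groups of the natural systems $\pi^*M$, $(p\pi)^*F$ and $(qp\pi)^*L$, so the Remark rewrites them as $\Ext^n_{F\Cc}(\Z,-)$ of these same coefficients. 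Everything then comes down to transporting the computation from $F\Cc$ down along $\pi$, $p\pi$ and $qp\pi$ to $\Cc^{op}\times\Cc$, $\Cc$ and $\pi_1\Cc$.

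The mechanism I would isolate is the following. For a functor $v\colon{\mathscr D}\to{\mathscr D}'$ of small categories and $N\colon{\mathscr D}'\to\Aa$, the adjunction $Lan^{v}\dashv v^*$ gives in degree zero a natural isomorphism $\llim_{{\mathscr D}}(v^*N)\cong\Hom_{{\mathscr D}'}(Lan^{v}\Z,N)$, and I want to promote it to
$$\Ext^n_{{\mathscr D}}(\Z,v^*N)\cong\Ext^n_{{\mathscr D}'}(Lan^{v}\Z,N).$$
Since $v^*$ is exact, $Lan^{v}$ preserves projectives, so applying it to a projective resolution of the constant functor $\Z$ in $\Fun({\mathscr D},\Ab)$ yields a complex of projectives over $Lan^{v}\Z$; this is a resolution, and the comparison above is an isomorphism, exactly when the higher satellites $L_qLan^{v}\Z$ vanish for $q>0$. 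By the comma-category description of the satellites of the left Kan extension recorded before Corollary~\ref{H_*andre+GZ}, these are governed by the homology of the comma categories,
$$(Lan^{v}\Z)(c')\cong\Z[\pi_0(v/c')],\qquad(L_qLan^{v}\Z)(c')\cong H_q(v/c';\Z).$$
Thus for each of our three functors the task splits into computing $\pi_0$ of the comma categories and checking that they are acyclic; this acyclicity is the crux of the whole argument.

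For $v=\pi$ and $c'=(a,b)$ I would show that $\pi/(a,b)$ decomposes as the disjoint union, over the morphisms $g\in\Cc(a,b)$, of the categories of factorizations of $g$, each of which has a terminal object $(g,1_a,1_b)$ and is hence contractible. This gives $\pi_0(\pi/(a,b))\cong\Cc(a,b)$ with vanishing higher homology, so $Lan^{\pi}\Z$ is the free bimodule $\Z\Cc\colon(a,b)\mapsto\Z[\Cc(a,b)]$, and part (i) follows. For $v=p\pi$, the codomain functor $F\Cc\to\Cc$, I would exhibit a deformation retraction of $(p\pi)/c$ onto the slice $\Cc/c$: the assignment $(f,h)\mapsto(hf,1_c)$ together with the morphisms $(1_x,h)\colon(f,h)\to(hf,1_c)$ of $F\Cc$ defines a natural transformation from the identity to this retraction, and $\Cc/c$ is contractible as it has the terminal object $(c,1_c)$. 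Hence $(p\pi)/c$ is contractible, $Lan^{p\pi}\Z$ is the constant functor $\Z$ on $\Cc$, and the change of base reads $\Ext^n_{F\Cc}(\Z,(p\pi)^*F)\cong\Ext^n_{\Cc}(\Z,F)$; the final $\Ext^n_{\Cc}(\Z,F)\cong\llim^n_{\Cc}F$ is the description of $\llim_{\Cc}$ as $\Hom_{\Cc}(\Z,-)$ and its right derived functors. This proves (ii).

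Part (iii) follows the same template along $qp\pi\colon F\Cc\to\pi_1\Cc$, reducing the middle isomorphisms to the assertion that the corresponding comma categories are acyclic with connected $\pi_0$, which would give $Lan^{qp\pi}\Z\cong\Z$ and hence $\Ext^n_{F\Cc}(\Z,(qp\pi)^*L)\cong\Ext^n_{\pi_1\Cc}(\Z,L)\cong\llim^n_{\pi_1\Cc}L$; the concluding identification with $H^n(B\Cc,L)$ then uses that the cohomology of the groupoid $\pi_1\Cc$ with coefficients in $L$ computes the cohomology of its classifying space with the associated local system, together with the homotopy equivalence $B(F\Cc)\simeq B\Cc$ coming from the edgewise-subdivision description of the nerve of the factorization category. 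I expect this last acyclicity, for the localization into the fundamental groupoid, to be the genuine obstacle: it is exactly here that the homotopy type of $B\Cc$ enters, and care is needed to guarantee that passing from $\Cc$ to $\pi_1\Cc$ does not alter the cohomology with the given local coefficients. The asserted naturality throughout is then inherited from the naturality of the adjunction $Lan^{v}\dashv v^*$ and of the comma-category constructions.
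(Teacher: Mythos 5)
Your treatment of (i) and (ii) is correct, and it takes a genuinely different route from the paper: there the two parts are disposed of by citing \cite[Proposition 8.5]{BW} (plus the observation that $\Z\Cc$ is the free bimodule $(a,b)\mapsto\Z[\Hom_{\Cc}(a,b)]$), whereas you in effect reprove that proposition from scratch. Your change-of-base mechanism is sound ($v^*$ exact, so $Lan^{v}$ preserves projectives, and $\Ext^n_{{\mathscr D}}(\Z,v^*N)\cong\Ext^n_{{\mathscr D}'}(Lan^{v}\Z,N)$ once $L_qLan^{v}\Z=0$ for $q>0$), and both comma computations check out: the fibre of $\pi/(a,b)$ over $g\in\Cc(a,b)$ is the category of factorizations of $g$, with terminal object $(g,1_a,1_b)$, and your transformation $(1_x,h)\colon(f,h)\to(hf,1_c)$ is indeed natural and retracts $(p\pi)/c$ onto the full subcategory of pairs $(g,1_c)$. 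Two harmless slips: with the composition conventions of $F\Cc$ that subcategory is $(\Cc/c)^{op}$, contractible via the \emph{initial} object $1_c$; and for a general $\Aa$ the groups $\Ext^n(\Z,-)$ and $\Ext^n(\Z\Cc,-)$ must be read via the $\Ab$-enrichment, the exactness of products being what makes $\Hom$ out of your resolutions compute the right thing --- this matches the paper's own conventions. What your argument buys over the paper's citation is the explicit identification $Lan^{\pi}\Z\cong\Z\Cc$ and $Lan^{p\pi}\Z\cong\Z$ with vanishing higher satellites, which is precisely the content of the cited result.

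For (iii), however, the step you flagged is a genuine failure, not merely the hard point: the comma categories over $qp\pi$ need \emph{not} be acyclic (connectivity is fine, higher homology is not). Since your part (ii) shows that $Lan^{p\pi}$ carries a projective resolution of $\Z$ to a projective resolution of $\Z$, one gets $L_qLan^{qp\pi}\Z\cong L_qLan^{q}\Z$, so the obstruction is $H_q(q/c;\Z)$; and $B(q/c)$ has the homotopy type of a universal cover of the component of $B\Cc$ at $c$. Concretely, take $\Cc$ to be the poset of faces of $\partial\Delta^3$, so that $B\Cc\cong S^2$ and $\pi_1\Cc$ is the indiscrete groupoid on the objects of $\Cc$: then $q/c\cong\Cc$ and $H_2(q/c;\Z)\cong\Z\neq 0$. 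The same example shows the passage through $\pi_1\Cc$ cannot be established without extra hypotheses: with constant coefficients it would force $H^2(B\Cc;A)\cong\llim^2_{\pi_1\Cc}A=0$, contradicting $H^2(S^2;A)\cong A$. What does hold in general --- and what the paper's appeal to Quillen \cite{Q} actually supplies --- is the outer chain $\Ext^n_{F\Cc}(\Z,\pi^*p^*q^*L)\cong\llim^n_{\Cc}q^*L\cong H^n(B\Cc,L)$: the first isomorphism is your own part-(ii) mechanism along $p\pi$, and the second holds because for a morphism-inverting functor the standard cochain complex computing $\llim^n_{\Cc}$ is literally the cochain complex of the nerve with local coefficients; no detour through $\pi_1\Cc$, and no need for your subdivision argument $B(F\Cc)\simeq B\Cc$. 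The identifications over $\pi_1\Cc$ are valid when each component of $B\Cc$ is aspherical (as for groups and groupoids, where $B\Cc\simeq B\pi_1\Cc$), so to complete (iii) along your lines you must either impose such a hypothesis or prove the outer isomorphisms directly and treat the groupoid terms separately.
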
 

\begin{proof}
The statements of (i) and (ii) follow immediately from \cite[Proposition 8.5]{BW}. The canonical $\Cc$-bimodule $\Z\Cc: \Cc^{op}\times \Cc\rightarrow \Aa$ used in (i) carries any object $(A, B)$ to the free abelian group generated by the morphism set $\Hom_{\Cc}(A, B)$. The statement of (iii) is due to Quillen \cite[\S 1, p. 83, (1)]{Q}.
\end{proof}

Dually, we have a similar characterization for the various homology theories of a small category. 

\begin{proposition} Let $\Aa$ be a cocomplete abelian category with exact coproducts. Then:
\begin{itemize}
\item[(i)] For any $\Cc$-bimodule $M$ of $\Fun(\Cc^{op}\times \Cc, \Aa)$ we have:\\
$H^{HM}_n(\Cc, M)\cong \Tor^{F\Cc}_n(\Z, \pi^*M)\cong\Tor_n^{\Cc^{op}\times \Cc}(\Z\Cc, M).$
\item[(ii)] For any $\Cc$-module $F$ of $\Fun(\Cc, \Aa)$ we have:\\
$H_n(\Cc, F)\cong \Tor^{F\Cc}_n(\Z, \pi^*p^*F)\cong\Tor_n^{\Cc}(\Z, F)\cong{\clim}_n^{\Cc} F.$
\item[(iii)] For any local system $L$ on $\Cc$ of $\Fun(\pi_1\Cc, \Aa)$ we have:\\
$H_n(\Cc, L)\cong \Tor^{F\Cc}_n(\Z, \pi^*p^*q^*L)\cong \Tor_n^{\pi_1\Cc}(\Z, L)\\\hspace*{1.7cm}\cong {\clim}_n^{\pi_1\Cc}L\cong H_n(B\Cc, L)$\\
where $B\Cc$ is the classifying space of the small category $\Cc$.
\end{itemize}
\end{proposition}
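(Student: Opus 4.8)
The plan is to dualise the proof of the preceding (cohomology) Proposition, systematically replacing limits, $\Ext$ and right Kan extensions by colimits, $\Tor$ and left Kan extensions. The entry point is the homological reinterpretation recorded in the Remark: for any natural system $D\colon F\Cc\to\Aa$ into a cocomplete abelian category with exact coproducts one has $H_n^{BW}(\Cc, D)\cong\Tor_n^{F\Cc}(\Z, D)\cong\clim_n^{F\Cc}D$. Using the defining equalities $H_*^{HM}(\Cc,M)=H_*^{BW}(\Cc,\pi^*M)$, $H_*(\Cc,F)=H_*^{BW}(\Cc,\pi^*p^*F)$ and $H_*(\Cc,L)=H_*^{BW}(\Cc,\pi^*p^*q^*L)$, each left-hand side of (i)--(iii) becomes a derived colimit over $F\Cc$ of a functor restricted along the forgetful functors in the diagram $F\Cc\xrightarrow{\pi}\Cc^{op}\times\Cc\xrightarrow{p}\Cc\xrightarrow{q}\pi_1\Cc$, and the task is to push these derived colimits down the diagram.

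For (i) I would use change of base along $\pi$, which is the dual of \cite[Proposition 8.5]{BW}. Writing the colimit as a coend and applying the density/projection formula gives $\Z\otimes_{F\Cc}\pi^*M\cong(Lan^\pi\Z)\otimes_{\Cc^{op}\times\Cc}M$, where a direct computation identifies the left Kan extension $Lan^\pi\Z$ with the canonical bimodule $\Z\Cc$ sending $(a,b)$ to the free object on $\Hom_\Cc(a,b)$. The main obstacle is to promote this to an isomorphism of derived functors: I would show that the higher left satellites $Lan_q^\pi\Z$ vanish for $q>0$, so that the associated base-change spectral sequence degenerates and yields $\Tor_n^{F\Cc}(\Z,\pi^*M)\cong\Tor_n^{\Cc^{op}\times\Cc}(\Z\Cc,M)$. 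Concretely this vanishing reduces to checking that the comma categories computing $Lan_q^\pi\Z$ are homologically trivial; each has a terminal object (the pair of identities lying over a fixed morphism $a\to b$), hence is contractible, and its set of components is exactly $\Hom_\Cc(a,b)$. This is the precise homological content built into the factorization category. Part (ii) then follows from (i) by contracting one variable: since $p^*F$ depends only on the target, $\Z\Cc\otimes_{\Cc^{op}\times\Cc}p^*F\cong\Z\otimes_\Cc F$, and the analogous satellite vanishing gives $\Tor_n^{\Cc^{op}\times\Cc}(\Z\Cc,p^*F)\cong\Tor_n^\Cc(\Z,F)\cong\clim_n^\Cc F$.

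For (iii) I would iterate the same reductions along $p$ and the localisation $q$ and then invoke the dual of Quillen's computation \cite{Q}: this identifies $H_n^{BW}(\Cc,\pi^*p^*q^*L)$ with $\Tor_n^{\pi_1\Cc}(\Z,L)\cong\clim_n^{\pi_1\Cc}L\cong H_n(B\Cc,L)$, where the final isomorphism crucially uses that $L$ is a genuine local system, so that the Baues-Wirsching chain complex coincides with the chain complex of the nerve $N_\bullet\Cc$ with local coefficients, which computes the homology of the classifying space $B\Cc$. All naturality statements are inherited from the naturality in $\Cc$ of the functors $\pi,p,q$ and of the identifications of the Remark. In sum, the only genuinely new verification compared with the cohomology case is the satellite-vanishing and contractibility bookkeeping above; the role of the exact-coproduct hypothesis on $\Aa$ is precisely to guarantee that these left satellites and coend computations behave as in the classical module-theoretic setting, dualising the exact-product hypothesis used for $\Ext$ and $\lim$.
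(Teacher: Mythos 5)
Your strategy---dualize the preceding proposition and quote Quillen for (iii)---is exactly the paper's route (its entire proof is that one-line citation), and you go further by actually supplying the content of the dual of \cite[Proposition 8.5]{BW}. However, your concrete base-change computation contains a variance error, and it sits precisely at the point where homology fails to be a mirror image of cohomology. The comma categories you analyse, whose objects are factorizations $a\xrightarrow{\alpha}x\xrightarrow{f}y\xrightarrow{\beta}b$ of morphisms $a\to b$, with terminal objects $(1_a,g,1_b)$ and component set $\Hom_\Cc(a,b)$, are the categories $\pi/(a,b)$ computing the \emph{covariant} Kan extension $Lan^\pi\Z$; that is the key lemma for the $\Ext$/cohomology statement, i.e.\ it reproves the \emph{preceding} proposition. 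On the $\Tor$ side, the projection formula $\Z\otimes_{F\Cc}\pi^*M\cong(Lan\,\Z)\otimes_{\Cc^{op}\times\Cc}M$ requires the Kan extension of the constant \emph{right} module, taken along $\pi^{op}$ and computed pointwise on the comma categories $((a,b)/\pi)^{op}$, whose objects are triples $(x\to a,\ f\colon x\to y,\ b\to y)$; these are not factorization categories, their components are not $\Hom_\Cc(a,b)$, and the resulting Kan extension is not $\Z\Cc$. Indeed, with your (covariant) conventions the middle isomorphism in (i) already fails for the free arrow $\Cc=\{0\stackrel{u}\longrightarrow 1\}$ and $M=\Z\Cc$: here $F\Cc$ is the cospan $1_0\to u\leftarrow 1_1$ with terminal object $u$, so $\Z\otimes_{F\Cc}\pi^*M\cong\clim_{F\Cc}\pi^*M\cong M(0,1)\cong\Z$, whereas $\Tor_0^{\Cc^{op}\times\Cc}(\Z\Cc,M)$ is the zeroth Hochschild homology of the ring of upper triangular $2\times 2$ integer matrices, which is $\Z\oplus\Z$.

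The missing idea is that for homology the coefficients must occupy the contravariant slot. The natural system attached to a bimodule at the chain level is $f\mapsto M(\mathrm{cod}f,\mathrm{dom}f)$, i.e.\ the restriction of $M$ along the canonical functor $\bar\pi\colon(F\Cc)^{op}\to\Cc^{op}\times\Cc$, $f\mapsto(\mathrm{cod}f,\mathrm{dom}f)$, $(\alpha,\beta)\mapsto(\beta,\alpha)$; note this is also the only reading under which the face maps $d_0$ and $d_n$ of the paper's Baues--Wirsching chain complex typecheck, since a covariant natural system admits no structure maps \emph{out of} $D(f_1\circ\cdots\circ f_n)$, and note too that $(F\Cc)^{op}$ is \emph{not} $F(\Cc^{op})$, so no formal duality disguises this. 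Once homology is set up as a derived colimit over $(F\Cc)^{op}$ of $\bar\pi^*M$, the comma categories $(c,d)/\bar\pi$ that govern the relevant Kan extension of the constant module are canonically isomorphic to the factorization categories of morphisms $d\to c$, and then your terminal-object contractibility argument, the satellite vanishing, and the preservation of projectives by the left Kan extension all apply verbatim and yield $\Tor_n^{\Cc^{op}\times\Cc}(\Z\Cc,M)$ (in the example above one now correctly gets the colimit of $\Z\leftarrow 0\to\Z$, namely $\Z\oplus\Z$). Parts (ii) and (iii) inherit the same correction and then go through as you describe, with (iii) ending in the Quillen citation exactly as in the paper. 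So: right key lemma and more genuine content than the paper's appeal to duality, but as written your base-change step proves the cohomological proposition a second time and breaks for homology until the variance of the coefficient system is repaired.
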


\begin{proof}
The proofs of (i) and (ii) are just dual to those of the preceding theorem and the statement of (iii) was proven by Quillen \cite[\S 1, p.83, (2)]{Q}.
\end{proof}

The cohomology and homology theories in (i) for $\Cc$-bimodule coefficients are the ones introduced by Hochschild and Mitchell, and which are also referred to as 
{\it Hochschild-Mitchell cohomology} \cite{CWMi,Mi}. Hochschild-Mitchell homology of categories was also introduced and studied before by Pirashvili and Waldhausen \cite{pw}. Special cases of (ii) have been studied by Watts \cite{Wa}, Roos \cite{Ro} and Quillen \cite{Q}. 
These in turn also naturally generalize group cohomology and homology, where a group $G$ is simply seen as a category with one object and morphism set $G$.

We will now derive several spectral sequences for Baues-Wirsching cohomology and homology from the respective Andr\'e spectral sequences as constructed in the first paragraph.

Let us consider a functor $u:\Ee\to \Bb$ between small categories and the following diagram of categories and functors as defined before:
$$
\xymatrix{F\Ee\rrto^{\pi}\dto^{Fu}&&\Ee^{op}\times \Ee\dto^{u^{op}\times u}\rrto^{p}&& \Ee\dto^u\rrto^{q}&&\pi_1\Ee\dto^{\pi_1(u)}\rrto^{t}&&\mathbb{1}\ar@{=}[d]\\
                  F\Bb\rrto^{\tilde{\pi}}               &&\Bb^{op}\times \Bb\rrto^{\tilde{p}}&&\Bb\rrto^{\tilde{q}}            &&\pi_1\Bb\rrto^{\tilde{t}}      &&\mathbb{1}}
$$

In order to deal with all the different cases of coefficient systems at once, let us write $\Cc$ for any of the categories in the lower row of the above diagram and denote by
$u':F\Ee\to \Cc$ the associated composition of functors. Furthermore, let $\Aa$ be a complete abelian category with exact products. 
We have in each case a diagram:
 $$
\xymatrix{
*+++{\Fun(F\Ee,\Aa)}\ar@<-5pt>[rrrr]_{u'_*} \ar@<5pt>[rrdd]^-{\lim_{F\Ee}}
&&&& *+++{\Fun(\Cc,\Aa)} \ar@<-5pt>[llll]_{{u'}^*}   \ar@<5pt>[lldd]^-{\lim_{\Cc}} \\ \\
&&\Aa\ar@<5pt>[rruu]^-c \ar@<5pt>[lluu]^-c
}
$$
where $c$ denotes the constant diagram functor, ${u'}^*$ is pre-composition with $u'$, and the other functors in the diagram are the right adjoints of these, given by the limits $\lim_{F\Ee}$, $\lim_{\Cc}$ and by $u'_*=Ran_{u'}$. 

The spectral sequence for the derived functors 
of the composite functor 
$$\llim_{F\Ee}(-)=\llim_\Cc u'_ *(-)$$
is an Andr\'e spectral sequence as considered above. It converges to the Baues-Wirsching cohomology of $\Ee$ with coefficients $D$ of $\Fun(F\Ee,\Aa)$ and therefore Theorem \ref{H^*andre} gives a first quadrant cohomology spectral sequence of the form:
$$
E_2^{p,q}\cong H^p(\Cc, Ran^q_{u'}(D))\Rightarrow H^{p+q}_{BW}(\Ee, D)
$$
Identifying the terms in the $E_2$-page according to Corollary \ref{H^*andre+GZ} we therefore get:
$$
E_2^{p,q}\cong H^p(\Cc,  \h^q(-/u', D\circ Q^{(-)}))\Rightarrow H^{p+q}_{BW}(\Ee, D)
$$
where $D\circ Q^{(-)}$ is the composition of functors
$$-/u'\stackrel{Q^{(-)}}\longrightarrow F\Ee\stackrel{D}\longrightarrow \Aa.$$

In particular cases the $E_2$-page may be simplified.
For example in the case $\Cc=F\Bb$ with $u'=Fu$ considered above, we get the
following spectral sequence:

\begin{theorem}\label{H^*BW} 
Let $\Ee$ and $\Bb$ be small categories and $u: \Ee\rightarrow \Bb$ a functor. Let $\Aa$ be a complete abelian category with exact products. Given a natural system $D: F\Ee\rightarrow \Aa$ on $\Ee$, there is a first quadrant cohomology spectral sequence 
$$E_2^{p,q}\cong H_{BW}^p(\Bb, (R^q Fu_*)(D))\Rightarrow H^{p+q}_{BW}(\Ee, D)$$
which is functorial with respect to natural transformations and where $R^qFu_*=Ran^q_{Fu}$ is the $q$-th right satellite of $Ran_{Fu}$, the right Kan extension 
along $Fu$.
\end{theorem}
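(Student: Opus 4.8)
The plan is to deduce the statement directly from the Andr\'e spectral sequence of Theorem \ref{H^*andre}, applied to the induced functor $Fu\colon F\Ee\to F\Bb$ between factorization categories, and then to translate the resulting ordinary limit-cohomology of $F\Ee$ and $F\Bb$ back into Baues-Wirsching cohomology of $\Ee$ and $\Bb$ by means of the identifications recorded in the Remark above. The observation that makes this work is that a natural system is literally a functor out of a factorization category, so that Baues-Wirsching cohomology is nothing but derived-limit cohomology, $H^n_{BW}(\Cc,D)\cong H^n(F\Cc,D)=\llim^n_{F\Cc}D$. Once this is in hand, the abstract machinery of the first subsection applies verbatim upon replacing $\Ee$, $\Bb$ and $u$ by $F\Ee$, $F\Bb$ and $Fu$.

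First I would note that $F\Ee$ and $F\Bb$ are again small categories (their objects are the morphisms of $\Ee$ and $\Bb$) and that $D\colon F\Ee\to\Aa$ is simply a functor into the complete abelian category $\Aa$ with exact products. Hence Theorem \ref{H^*andre}, applied to the functor $Fu$ with coefficient functor $D$, yields a first quadrant spectral sequence
$$E_2^{p,q}\cong H^p(F\Bb,(R^q Fu_*)(D))\Rightarrow H^{p+q}(F\Ee,D),$$
where $R^q Fu_*=Ran^q_{Fu}$ is the $q$-th right satellite of $Ran_{Fu}\colon\Fun(F\Ee,\Aa)\to\Fun(F\Bb,\Aa)$. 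This is exactly the instance $\Cc=F\Bb$, $u'=Fu$ of the general construction preceding the statement, in which the factorization $\llim_{F\Ee}=\llim_{F\Bb}\circ Ran_{Fu}$ of the limit functors feeds the composite-functor (Grothendieck-type) spectral sequence.

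Next I would identify the two edges of this spectral sequence. For the abutment, the Remark gives $H^{p+q}(F\Ee,D)\cong H^{p+q}_{BW}(\Ee,D)$ directly. For the $E_2$-term, the crucial point is that $Ran_{Fu}$ and all its satellites take values in $\Fun(F\Bb,\Aa)$, so each $(R^q Fu_*)(D)$ is itself a functor $F\Bb\to\Aa$, that is, a natural system on $\Bb$; applying the Remark once more, now over $\Bb$, yields $H^p(F\Bb,(R^q Fu_*)(D))\cong H^p_{BW}(\Bb,(R^q Fu_*)(D))$. Substituting both identifications into the spectral sequence above produces the claimed form. Functoriality with respect to natural transformations is inherited from the inputs: it holds for the Andr\'e spectral sequence by Theorem \ref{H^*andre}, the assignment $u\mapsto Fu$ is functorial on $\Cat$, and the identifications of the Remark are natural in both the category and the coefficient system.

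The remaining verifications are routine, and I do not expect a genuine obstacle so much as one point requiring care: confirming that the hypotheses of Theorem \ref{H^*andre} transport cleanly to $Fu$ and, underlying this, that the limit functors really do compose as $\llim_{F\Ee}=\llim_{F\Bb}\circ Ran_{Fu}$ (equivalently, that the constant diagram functor into $\Fun(F\Ee,\Aa)$ factors through $(Fu)^*$). Since this composite-functor structure is precisely what the Andr\'e spectral sequence is built to exploit, invoking Theorem \ref{H^*andre} as a black box removes any need to re-examine the relevant acyclicity conditions by hand, and the proof reduces to the two translations above.
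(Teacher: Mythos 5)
Your proof is correct and is essentially the paper's own argument: the authors likewise apply the Andr\'e spectral sequence of Theorem \ref{H^*andre} to the composite limit functor $\llim_{F\Ee}=\llim_{\Cc}\circ\, Ran_{u'}$ in the special case $\Cc=F\Bb$, $u'=Fu$, and then rewrite both the abutment $H^{p+q}(F\Ee,D)$ and the $E_2$-term $H^p(F\Bb,(R^qFu_*)(D))$ as Baues--Wirsching cohomology via the identification $H^n_{BW}(\Cc,D)\cong H^n(F\Cc,D)\cong\llim^n_{F\Cc}D$ from the Remark. Your closing caveat about the factorization of the limit functors is exactly the adjunction $((Fu)^*, Ran_{Fu})$ together with $c_{F\Ee}=(Fu)^*\circ c_{F\Bb}$, which the paper's displayed triangle of functor categories records.
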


Using the identification of the terms in the $E_2$-page as above we get therefore:

\begin{corollary}\label{H^*BW+GZ} 
Let $u: \Ee\rightarrow \Bb$ be a functor between small categories and $\Aa$ a complete abelian category with exact products. 
Let $D:F\Ee\rightarrow \Aa$ be a natural system on $\Ee$.
Then there exists a first quadrant cohomology spectral sequence of the form
$$E_2^{p,q}\cong H_{BW}^p(\Bb, \h^q(-/Fu, D\circ Q^{(-)}))\Rightarrow H_{BW}^{p+q}(\Ee, D)$$
which is functorial with respect to natural transformations and where  
$$\h^q(-/Fu, D\circ Q^{(-)})= \llim^q_{-/Fu}(D\circ Q^{(-)}): F\Bb\rightarrow \Aa.$$
\end{corollary}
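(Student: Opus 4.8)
The plan is to deduce this statement directly from Theorem~\ref{H^*BW} by specializing the comma-category identification of right satellites, in exactly the way Corollary~\ref{H^*andre+GZ} was deduced from Theorem~\ref{H^*andre}. The spectral sequence and its convergence are already furnished by Theorem~\ref{H^*BW}; the only remaining task is to rewrite the $E_2$-term.

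First I would recall that Theorem~\ref{H^*BW} provides the first quadrant cohomology spectral sequence
$$E_2^{p,q}\cong H_{BW}^p(\Bb, (R^qFu_*)(D))\Rightarrow H^{p+q}_{BW}(\Ee, D),$$
where $R^qFu_*=Ran^q_{Fu}$ is the $q$-th right satellite of the right Kan extension along $Fu:F\Ee\to F\Bb$. Since $Ran_{Fu}$ maps $\Fun(F\Ee,\Aa)$ to $\Fun(F\Bb,\Aa)$, the coefficient object $(R^qFu_*)(D)$ is a natural system on $\Bb$, i.e.\ a functor $F\Bb\to\Aa$ sending $f\mapsto (Ran^q_{Fu}D)(f)$; identifying this functor is all that is required.

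Next I would apply the comma-category description of right satellites stated just before Corollary~\ref{H^*andre+GZ} (the dual of \cite[Application~2]{GZ}), now taking $Fu:F\Ee\to F\Bb$ in place of the generic functor $u$ and the natural system $D:F\Ee\to\Aa$ in place of $F$. For each object $f$ of $F\Bb$ this yields an isomorphism
$$(Ran^q_{Fu}D)(f)\cong \llim^q_{f/Fu}(D\circ Q^f)=\h^q(f/Fu, D\circ Q^{f}),$$
where $Q^f: f/Fu\to F\Ee$ is the forgetful functor from the comma category. Invoking the naturality of these isomorphisms in the object $f$, they assemble into an isomorphism of functors $F\Bb\to\Aa$, that is, of natural systems on $\Bb$,
$$(R^qFu_*)(D)\cong \h^q(-/Fu, D\circ Q^{(-)}).$$
Substituting this into the $E_2$-term above gives the asserted spectral sequence. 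Functoriality with respect to natural transformations is inherited from Theorem~\ref{H^*BW} together with the naturality of the GZ isomorphisms in the functor variable $D$.

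The argument is essentially formal, the genuine content residing in Theorem~\ref{H^*BW} and in the GZ identification, both of which are already available. The one point demanding care — and the step I expect to be the main (if minor) obstacle — is verifying that the pointwise isomorphisms above are truly natural in $f$, so that they upgrade to an isomorphism in $\Fun(F\Bb,\Aa)$ rather than a mere family of object-wise isomorphisms; this is precisely the naturality in objects asserted in the statement preceding Corollary~\ref{H^*andre+GZ}, specialized here to the functor $Fu$.
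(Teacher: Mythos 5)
Your proposal is correct and follows essentially the same route as the paper: the paper obtains Corollary~\ref{H^*BW+GZ} from Theorem~\ref{H^*BW} by applying the dual of \cite[Application~2]{GZ} to the functor $Fu:F\Ee\to F\Bb$, identifying $(R^qFu_*)(D)$ objectwise as $\llim^q_{f/Fu}(D\circ Q^f)$ and using naturality in $f$ and $D$ to upgrade this to an isomorphism of natural systems on $\Bb$. Your attention to the naturality of the pointwise isomorphisms matches exactly the point the paper relies on when it asserts the isomorphisms are ``natural in objects $b$ and functors $F$.''
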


For the case $\Cc=\Bb^{op}\times \Bb$ and $u'=\tilde{\pi}\circ Fu$ with a natural system $D: F\Ee\rightarrow \Aa$ on $\Ee$ we get in particular the following spectral sequence:
$$E_2^{p,q}\cong H^p(\Bb^{op}\times \Bb, R^q (\tilde{\pi}\circ Fu)_*(D))\Rightarrow H^{p+q}_{BW}(\Ee, D)$$ 
which after identifying the various terms involved gives a spectral sequence for Baues-Wirsching cohomology in terms of Hochschild-Mitchell cohomology:

\begin{theorem}\label{Thm+HM}
Let $\Ee$ and $\Bb$ be small categories and $u: \Ee\rightarrow \Bb$ a functor. Let $\Aa$ be a complete abelian category with exact products. Given a natural system $D: F\Ee\rightarrow \Aa$, there is a first quadrant cohomology spectral sequence 
$$E_2^{p,q}\cong H_{HM}^p(\Bb, R^q ((u^{op}\times u)\circ \pi)_*(D))\Rightarrow H^{p+q}_{BW}(\Ee, D)$$
which is functorial with respect to natural transformations and where $R^q((u^{op}\times u)\circ \pi)_*=Ran^q_{(u^{op}\times u)\circ \pi}$ is given as the $q$-th right satellite of $Ran_{(u^{op}\times u)\circ \pi}$, the right Kan extension along $(u^{op}\times u)\circ \pi$.
\end{theorem}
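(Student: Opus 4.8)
The plan is to read Theorem \ref{Thm+HM} as the instance of the general construction above in which the intermediate category is $\Cc=\Bb^{op}\times\Bb$ and $u'=\tilde\pi\circ Fu$. First I would record that the leftmost square of the large diagram commutes: by naturality of the forgetful functor $\pi\colon F\Cc\to\Cc^{op}\times\Cc$ with respect to $u$, the composite $u'$ admits the two descriptions
\[
u'=\tilde\pi\circ Fu=(u^{op}\times u)\circ\pi\colon F\Ee\longrightarrow\Bb^{op}\times\Bb .
\]
Feeding this $u'$ into Theorem \ref{H^*andre}, and using the identification $H^{*}(F\Ee,D)\cong H^{*}_{BW}(\Ee,D)$ recalled in the Remark, yields directly a first quadrant spectral sequence
\[
E_2^{p,q}\cong H^p\bigl(\Bb^{op}\times\Bb,\,R^q u'_*(D)\bigr)\Rightarrow H^{p+q}_{BW}(\Ee,D),
\]
in which $R^q u'_*=Ran^q_{(u^{op}\times u)\circ\pi}$ by the second description of $u'$. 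This is exactly the displayed spectral sequence that precedes the statement.

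It remains to rewrite the base term of the $E_2$-page. Here I would use the definition $H^{*}_{HM}(\Bb,M)=H^{*}_{BW}(\Bb,\tilde\pi^{*}M)$ of Hochschild-Mitchell cohomology together with the computation of the Proposition: via the adjunction $Lan^{\tilde\pi}\dashv\tilde\pi^{*}$ and the equality $Lan^{\tilde\pi}\Z=\Z\Bb$, the Hochschild-Mitchell groups of $\Bb$ with coefficients in a bimodule $M$ are the groups $\Ext^{*}_{\Bb^{op}\times\Bb}(\Z\Bb,M)$. Applying this with $M=R^q u'_*(D)=R^q\bigl((u^{op}\times u)\circ\pi\bigr)_*(D)$ is what should convert the base term into $H^{p}_{HM}(\Bb,R^q u'_*(D))$. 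I expect this identification of the base cohomology of the product category $\Bb^{op}\times\Bb$ with the Hochschild-Mitchell cohomology of $\Bb$ to be the main obstacle and the step demanding the most care, since the two invariants are built from different generating objects, the constant system $\Z$ and the canonical bimodule $\Z\Bb$; the argument has to pin down precisely which of these governs the $E_2$-page in this factorization, and to check that the comparison is compatible with the satellite coefficients $R^q u'_*(D)$.

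Finally, the asserted functoriality in natural transformations is inherited from the functoriality already built into Theorem \ref{H^*andre}. Each operation involved, namely the passage $D\mapsto R^q u'_*(D)$, the base cohomology, and the resulting spectral sequence, is functorial in $D$ and natural in the pair $(u,\Ee)$, so no separate naturality verification is required beyond the one supplied by the Andr\'e construction.
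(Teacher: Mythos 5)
Your derivation coincides, step for step, with the paper's own argument: the paper's entire ``proof'' of Theorem~\ref{Thm+HM} consists of specializing the composite-functor (Andr\'e) spectral sequence to the case $\Cc=\Bb^{op}\times\Bb$, $u'=\tilde\pi\circ Fu=(u^{op}\times u)\circ\pi$, which yields exactly your intermediate spectral sequence $E_2^{p,q}\cong H^p(\Bb^{op}\times\Bb, R^qu'_*(D))\Rightarrow H^{p+q}_{BW}(\Ee,D)$, followed by the single phrase ``after identifying the various terms involved.'' So you have reproduced everything the paper actually proves, and the step you single out as the main obstacle is precisely the step the paper leaves implicit.

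Moreover, your worry about that step is substantive, and you should not expect to close it by the adjunction you sketch. The Andr\'e $E_2$-term is $\llim^p_{\Bb^{op}\times\Bb}N\cong\Ext^p_{\Bb^{op}\times\Bb}(\Z,N)$ with $N=R^qu'_*(D)$, whereas by the paper's own Proposition one has $H^p_{HM}(\Bb,N)\cong\Ext^p_{\Bb^{op}\times\Bb}(\Z\Bb,N)$; since nothing constrains the satellite coefficients $N$, these groups differ for general bimodules, already at $p=0$. For instance, if $\Bb=G$ is a group viewed as a one-object category, then $\Hom_{G^{op}\times G}(\Z,N)$ consists of the elements invariant under the left and right actions separately, while $\Hom_{G^{op}\times G}(\Z G,N)\cong N^G$ is the conjugation invariants (the Hochschild $HH^0$); and in degree one, for $G=\Z$ and trivial coefficients $\Z$, one gets $H^1(G^{op}\times G,\Z)\cong\Z^2$ versus $H^1_{HM}(G,\Z)\cong\Z$. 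Nor can one instead run the factorization through $F\Bb$ (Theorem~\ref{H^*BW}) and hope that $R^qFu_*(D)\cong\tilde\pi^*R^qu'_*(D)$: that would need $\tilde\pi^*Ran_{\tilde\pi}\cong\mathrm{id}$, i.e.\ $\tilde\pi$ fully faithful, which fails since $\tilde\pi$ is faithful but not full. So the conversion of the base term is not bookkeeping: it requires either additional hypotheses or a comparison argument that neither you nor the paper supplies. Your proposal is therefore exactly as complete as the paper's proof, and the gap you flagged is a genuine one --- in both.
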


As before we can identify the terms in the $E_2$-page of the spectral sequence with more concrete data and get:

\begin{corollary}\label{Cor+HM}
Let $u: \Ee\rightarrow \Bb$ be a functor between small categories. Let $\Aa$ be a complete abelian category with exact products. Given a natural system $D: F\Ee\rightarrow \Aa$, there is a first quadrant cohomology spectral sequence 
$$E_2^{p,q}\cong H_{HM}^p(\Bb, \h^q(-/(u^{op}\times u)\circ \pi, D\circ Q^{(-)}))\Rightarrow H_{BW}^{p+q}(\Ee, D)$$
which is functorial with respect to natural transformations and where  
$$\h^q(-/(u^{op}\times u)\circ \pi, D\circ Q^{(-)})= \llim^q_{-/(u^{op}\times u)\circ \pi} (D\circ Q^{(-)}): \Bb^{op}\times \Bb\rightarrow \Aa.$$
\end{corollary}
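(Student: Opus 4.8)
The plan is to obtain Corollary~\ref{Cor+HM} directly from Theorem~\ref{Thm+HM} by rewriting the coefficient system on the $E_2$-page, in exact parallel with the way Corollary~\ref{H^*andre+GZ} was extracted from Theorem~\ref{H^*andre} and Corollary~\ref{H^*BW+GZ} from Theorem~\ref{H^*BW}. All of the homological work --- the Grothendieck/Andr\'e spectral sequence for the composite of $\llim_{\Bb^{op}\times\Bb}$ with the right Kan extension, and its reinterpretation in terms of Hochschild--Mitchell cohomology --- has already been carried out in producing the spectral sequence of Theorem~\ref{Thm+HM}. The only genuinely new ingredient is the comma-category description of the right satellites of the right Kan extension, namely the dual of \cite[Application~2]{GZ}, and the remaining task is a purely formal identification of the $\Bb$-bimodule appearing as coefficients.

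Concretely, I would set $u'=(u^{op}\times u)\circ\pi\colon F\Ee\to\Bb^{op}\times\Bb$, which by commutativity of the defining left-hand square equals $\tilde\pi\circ Fu$; thus $R^q((u^{op}\times u)\circ\pi)_*=Ran^q_{u'}$ is precisely the satellite occurring in the $E_2$-term of Theorem~\ref{Thm+HM}. For each object $c=(b,b')$ of $\Bb^{op}\times\Bb$, let $Q^{c}\colon c/u'\to F\Ee$ be the forgetful functor from the comma category. The dual of \cite[Application~2]{GZ} then supplies an isomorphism
$$(Ran^q_{u'}D)(c)\cong \llim^q_{c/u'}(D\circ Q^{c}),$$
whose right-hand side is by definition the value at $c$ of the natural system $\h^q(-/u',D\circ Q^{(-)})=\llim^q_{-/u'}(D\circ Q^{(-)})\colon \Bb^{op}\times\Bb\to\Aa$. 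Substituting this identification of the $\Bb$-bimodule $Ran^q_{u'}D$ into the $E_2$-term $H^p_{HM}(\Bb,Ran^q_{u'}D)$ of Theorem~\ref{Thm+HM} then yields the spectral sequence asserted in the Corollary.

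The only point needing care, and hence the main (mild) obstacle, is naturality. I would verify that the displayed isomorphism is natural simultaneously in the base object $c\in\Bb^{op}\times\Bb$ and in the coefficient $D\in\Fun(F\Ee,\Aa)$: naturality in $c$ is what upgrades the pointwise isomorphism to an isomorphism of functors $\Bb^{op}\times\Bb\to\Aa$, i.e.\ an isomorphism of $\Bb$-bimodules, so that $H^p_{HM}(\Bb,-)$ may be applied to it, while naturality in $D$, combined with the functoriality already established in Theorem~\ref{Thm+HM}, transports the functoriality with respect to natural transformations to the identified spectral sequence. Since this double naturality of the comma-category formula was already recorded in the passages establishing Corollaries~\ref{H^*andre+GZ} and~\ref{H^*BW+GZ}, no new verification is required beyond specializing it to $u'=(u^{op}\times u)\circ\pi$, and the argument is routine.
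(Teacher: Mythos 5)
Your proposal is correct and follows exactly the paper's route: the paper obtains Corollary~\ref{Cor+HM} from Theorem~\ref{Thm+HM} by the same specialization $u'=(u^{op}\times u)\circ\pi=\tilde\pi\circ Fu$ and the same pointwise identification $(Ran^q_{u'}D)(b,b')\cong\llim^q_{(b,b')/u'}(D\circ Q^{(b,b')})$ from the dual of \cite[Application~2]{GZ}, natural in the object and in $D$. Your explicit remarks on double naturality merely spell out what the paper compresses into ``as before we can identify the terms in the $E_2$-page,'' so no new ingredient or deviation is involved.
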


In the special case that $D=\pi^*M$ for a bimodule $M: \Ee^{op}\times \Ee\rightarrow \Aa$ on $\Ee$ we can further identify the $E_2$-term of the spectral sequence in Theorem \ref{Thm+HM} to obtain a spectral sequence for Hochschild-Mitchell cohomology of the form:
$$E_2^{p,q}\cong H_{HM}^p(\Bb, R^q (u^{op}\times u)_*(M))\Rightarrow H^{p+q}_{HM}(\Ee, M) $$
Similarly, we can identify the $E_2$-page in terms of local fiber data as above in Corollary \ref{Cor+HM}. This spectral sequence can also be derived directly from the cohomological Andr\'e spectral sequence of Theorem \ref{H^*andre} using the interpretation of Hochschild-Mitchell cohomology as an appropriate cohomology of small categories.\\

Dually, using similar arguments as above we can derive homological versions of the spectral sequences for Baues-Wirsching homology of categories.

\begin{theorem}\label{H_*BW}  Let $\Ee$ and $\Bb$ be small categories and $u: \Ee\rightarrow \Bb$ a functor. Let $\Aa$ be a cocomplete abelian category with exact coproducts. Given a natural system $D: F\Ee\rightarrow \Aa$ on $\Ee$, there is a third quadrant homology spectral sequence 
$$E^2_{p,q}\cong H^{BW}_p(\Bb, (L_q Fu^*)(D))\Rightarrow H_{p+q}^{BW}(\Ee, D)$$
which is functorial with respect to natural transformations, where $L_q Fu^*=Lan_q^{Fu}$ is the $q$-th left satellite of $Lan^{Fu}$, the left Kan extension 
along $Fu$.
\end{theorem}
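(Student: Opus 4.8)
The plan is to dualize, mutatis mutandis, the derivation of Theorem~\ref{H^*BW}, applying the homological Andr\'e spectral sequence of Theorem~\ref{H_*andre} to the functor $Fu\colon F\Ee\to F\Bb$ induced on factorization categories. The starting point is the identification recorded in the Remark above, namely that for any small category $\Cc$ and any natural system $D\colon F\Cc\to\Aa$ one has
$$H_n^{BW}(\Cc, D)\cong\clim_n^{F\Cc} D\cong H_n(F\Cc, D),$$
so that Baues-Wirsching homology is simply the classical homology of the small category $F\Cc$ with coefficients in $D$ regarded as an ordinary module. Since $\Aa$ is cocomplete with exact coproducts, all the colimits and left satellites that occur below exist.

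First I would view the natural system $D\colon F\Ee\to\Aa$ as a functor (module) on the small category $F\Ee$ and feed the functor $Fu\colon F\Ee\to F\Bb$ into Theorem~\ref{H_*andre}. This produces at once a third quadrant spectral sequence
$$E^2_{p,q}\cong H_p(F\Bb, (L_q Fu_*)(D))\Rightarrow H_{p+q}(F\Ee, D),$$
where $L_q Fu_*=Lan_q^{Fu}$ is the $q$-th left satellite of the left Kan extension $Lan^{Fu}\colon\Fun(F\Ee,\Aa)\to\Fun(F\Bb,\Aa)$. Note that $(L_q Fu_*)(D)$ is again an object of $\Fun(F\Bb,\Aa)$, that is, a natural system on $\Bb$, so the $E^2$-term is meaningful.

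Next I would reinterpret both ends of this spectral sequence through the identification above. The abutment $H_{p+q}(F\Ee, D)$ becomes $H_{p+q}^{BW}(\Ee, D)$, and each term $H_p(F\Bb, (L_q Fu_*)(D))$ becomes $H_p^{BW}(\Bb, (L_q Fu_*)(D))$, which is exactly the asserted spectral sequence. Finally, the functoriality with respect to natural transformations would follow by combining the functoriality already guaranteed in Theorem~\ref{H_*andre} with the functoriality of the factorization construction $F\colon\Cat\to\Cat$, under which a natural transformation of functors $\Ee\to\Bb$ is carried to one of functors $F\Ee\to F\Bb$.

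As this is a formal dualization, I do not anticipate a genuine obstacle. The one point demanding care is to verify that the naturality isomorphisms used in the identification of $H_\bullet^{BW}$ with the classical homology $H_\bullet(F(-),-)$ are compatible with the maps induced by $Fu$, so that the reinterpretation respects the whole spectral sequence together with its naturality, and not merely the $E^2$-page and the abutment separately.
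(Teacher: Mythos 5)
Your proposal is correct and takes essentially the same route as the paper: the paper's proof likewise obtains Theorem \ref{H_*BW} by dualizing Theorem \ref{H^*BW}, applying the homological Andr\'e spectral sequence of Theorem \ref{H_*andre} to $Fu\colon F\Ee\to F\Bb$ together with the identification $H^{BW}_n(\Cc,D)\cong\clim^{F\Cc}_n D\cong H_n(F\Cc,D)$. Your closing remark on checking that the identification is natural in $Fu$ is the right point of care, and it is implicit in the paper's appeal to the functoriality of the derived colimit description.
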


\begin{proof}
This is simply dual to the statement of Theorem \ref{H^*BW} and follows from the dual Andr\'e homology spectral sequence involving the higher derived functors of $\clim^{F\Cc}_n D$
in the description of Baues-Wirsching homology $H^{BW}_n(\Cc, D)$ (see \cite{An}, \cite[p. 2291]{Hu} and \cite[Appendix II.3]{GZ}).
\end{proof}

We can further identify the $E^2$-page of this spectral sequence and get:

\begin{corollary}\label{H_*BW+GZ}  Let $\Ee$ and $\Bb$ be small categories and $u: \Ee\rightarrow \Bb$ a functor. Let $\Aa$ be a cocomplete abelian category with exact coproducts. Given a natural system $D: F\Ee\rightarrow \Aa$ on $\Ee$, there is a third quadrant cohomology spectral sequence 
$$E^2_{p,q}\cong H^{BW}_p(\Bb, \h_q(Fu/-, D\circ Q_{(-)}))\Rightarrow H_{p+q}^{BW}(\Ee, D)$$
which is functorial with respect to natural transformations and where  
$$\h_q(Fu/-, D\circ Q_{(-)})= \clim_q^{Fu/-}(D\circ Q_{(-)}): F\Bb\rightarrow \Aa.$$
\end{corollary}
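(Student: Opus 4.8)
\section*{Proof proposal}

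The plan is to deduce this corollary from Theorem~\ref{H_*BW} in exactly the way Corollary~\ref{H^*BW+GZ} was deduced from Theorem~\ref{H^*BW}, transposing every construction to its homological dual. Theorem~\ref{H_*BW} already supplies the third quadrant homology spectral sequence
$$E^2_{p,q}\cong H^{BW}_p(\Bb, (L_q Fu_*)(D))\Rightarrow H_{p+q}^{BW}(\Ee, D),$$
so the whole task reduces to rewriting the coefficient natural system $L_q Fu_*=Lan_q^{Fu}$ in terms of comma categories indexed by the objects of $F\Bb$.

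The key step is to invoke the dual form of \cite[Application 2]{GZ} recorded just before Corollary~\ref{H_*andre+GZ}, applied now to the functor $Fu: F\Ee\to F\Bb$ in place of a general $u$. For each object $g\in F\Bb$, that is, each morphism $g$ of $\Bb$, with $Q_g: Fu/g\to F\Ee$ the forgetful functor from the comma category, this gives a natural isomorphism
$$(Lan_q^{Fu} D)(g)\cong \clim_q^{Fu/g}(D\circ Q_g).$$
I would then observe that, by the same citation, these isomorphisms are natural both in the object $g$ and in the natural system $D$, so that they assemble into an isomorphism of natural systems
$$L_q Fu_* D\cong \h_q(Fu/-, D\circ Q_{(-)})=\clim_q^{Fu/-}(D\circ Q_{(-)}): F\Bb\to \Aa.$$
Substituting this identification into the $E^2$-term of Theorem~\ref{H_*BW} yields the stated spectral sequence, and its functoriality with respect to natural transformations is inherited directly from that of Theorem~\ref{H_*BW} together with the naturality in $D$.

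The only point requiring care, rather than a genuine obstacle, is keeping track of which comma category appears. Whereas the cohomological Corollary~\ref{H^*BW+GZ} uses the over-categories $-/Fu$ coming from the right Kan extension, here the left Kan extension forces the under-categories $Fu/-$, and I would check that $Fu/g$ has objects $(f,\phi)$ with $f$ a morphism of $\Ee$ and $\phi: Fu(f)\to g$ a morphism of $F\Bb$, with $Q_g(f,\phi)=f$. Matching this against the left-satellite version of the \cite{GZ} identification, the dualization is purely formal, so I expect no substantive difficulty beyond keeping the variances consistent throughout the argument.
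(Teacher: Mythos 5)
Your proposal is correct and takes essentially the same route as the paper: the paper likewise starts from Theorem~\ref{H_*BW} and identifies the $E^2$-coefficients via the isomorphism $Lan_q^{Fu}(D)\cong \clim_q^{Fu/\beta}(D\circ Q_\beta)$ (citing \cite{An} and \cite{Hu}, which is the same identification as the dual of \cite[Application 2]{GZ} you invoke). Your additional checks---the naturality in the object $g$ of $F\Bb$ and in $D$, and the explicit description of the comma category $Fu/g$---merely spell out details the paper's one-line proof leaves implicit.
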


\begin{proof} We can identify the $E^2$-term in the above homology spectral sequence as follows (see \cite{An}, \cite{Hu})
$$Lan_q^{Fu}(D)\cong \clim^{Fu/\beta}_q D\circ Q_*$$
which then gives the desired homology spectral sequence.
\end{proof}

For the special case $\Cc=\Bb^{op}\times \Bb$ and $u'=\tilde{\pi}\circ Fu$ with a natural system $D: F\Ee\rightarrow \Aa$ on $\Ee$ we get a homology spectral sequence 

$$E^2_{p,q}\cong H_p(\Bb^{op}\times \Bb, L_q (\tilde{\pi}\circ Fu)_*(D))\Rightarrow H_{p+q}^{BW}(\Ee, D)$$ 
which after identifying the various terms gives the following spectral sequence for Hochschild-Mitchell homology dual to the one of Theorem \ref{Thm+HM}:

\begin{theorem} 
Let $\Ee$ and $\Bb$ be small categories and $u: \Ee\rightarrow \Bb$ a functor. Let $\Aa$ be a cocomplete abelian category with exact coproducts. Given a natural system $D: F\Ee \rightarrow \Aa$, there is a third quadrant homology spectral sequence 
$$E^2_{p,q}\cong H^{HM}_p(\Bb, L_q ((u^{op}\times u)\circ \pi_*)(D))\Rightarrow H_{p+q}^{BW}(\Ee, D)$$
which is functorial with respect to natural transformations and where $L_q((u^{op}\times u)\circ \pi)_*=Lan_q^{(u^{op}\times u)\circ \pi}$ is given as the $q$-th left satellite of $Lan^{(u^{op}\times u)\circ \pi}$, the left Kan extension 
 along $(u^{op}\times u)\circ \pi$.
\end{theorem}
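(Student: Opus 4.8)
The plan is to obtain this statement as the Hochschild--Mitchell specialisation of the homological Andr\'e machinery, exactly dual to the derivation of Theorem~\ref{Thm+HM}. First I would apply the dual construction of this section to the composite functor
$$u' = (u^{op}\times u)\circ\pi = \tilde\pi\circ Fu : F\Ee \longrightarrow \Bb^{op}\times\Bb,$$
that is, take $\Cc=\Bb^{op}\times\Bb$ in the lower row of the commutative diagram. Since $\Aa$ is cocomplete with exact coproducts, the colimit functor factors as $\clim_{F\Ee}=\clim_{\Bb^{op}\times\Bb}\circ Lan^{u'}$, because the pullback along $u'$ of a constant diagram on $\Bb^{op}\times\Bb$ is the constant diagram on $F\Ee$; this is dual to the factorisation $\llim_{F\Ee}=\llim_{\Cc}\circ Ran_{u'}$ used before. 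The dual Andr\'e homology spectral sequence (Theorem~\ref{H_*andre} applied to $u'$) then reads
$$E^2_{p,q}\cong H_p(\Bb^{op}\times\Bb,\, L_q Lan^{u'}(D))\Rightarrow H_{p+q}(F\Ee, D),$$
which is exactly the intermediate homology spectral sequence recorded just above the statement.

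It remains to identify the two ends. For the abutment, the colimit description of Baues--Wirsching homology from the Remark gives $H_n(F\Ee,D)\cong\clim_n^{F\Ee}D\cong H^{BW}_n(\Ee,D)$, so the target is $H^{BW}_{p+q}(\Ee,D)$ as required. The coefficient $L_q Lan^{u'}(D)=L_q((u^{op}\times u)\circ\pi)_*(D)$ is a functor on $\Bb^{op}\times\Bb$, i.e. a $\Bb$-bimodule, so the left-hand entry is a priori the homology of the product category with bimodule coefficients.

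The step I expect to be the \emph{real obstacle} is the identification of $H_p(\Bb^{op}\times\Bb, N)$ with the Hochschild--Mitchell homology $H^{HM}_p(\Bb, N)$ for the arising $\Bb$-bimodule $N=L_q Lan^{u'}(D)$. This is the homological analogue of the term identification inside Theorem~\ref{Thm+HM} and rests on the dual of the Proposition, namely on $H^{HM}_n(\Bb,N)\cong\Tor_n^{\Bb^{op}\times\Bb}(\Z\Bb,N)$ together with the colimit description $H^{HM}_n(\Bb,N)\cong\clim_n^{F\Bb}(\tilde\pi^*N)$ over the factorisation category; here one must be careful, since this is genuinely the diagonal-bimodule $\Tor$ and not the plain colimit over $\Bb^{op}\times\Bb$. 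Substituting this identification into the $E^2$-page yields the asserted spectral sequence, and functoriality in natural transformations is inherited from that of the homological Andr\'e spectral sequence and of the colimit and Kan-extension functors, exactly as in the cohomological case, so no separate argument is needed there. Since every construction used is the formal dual of the one behind Theorem~\ref{Thm+HM}, the whole argument may equivalently be summarised as ``dualise Theorem~\ref{Thm+HM},'' carrying out the term identifications over colimits rather than limits.
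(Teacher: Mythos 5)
Your proposal is essentially the paper's own derivation: the paper obtains this theorem precisely by specialising the homological Andr\'e machinery to the case $\Cc=\Bb^{op}\times\Bb$ and $u'=\tilde{\pi}\circ Fu=(u^{op}\times u)\circ\pi$, displaying the intermediate spectral sequence $E^2_{p,q}\cong H_p(\Bb^{op}\times\Bb, L_q(\tilde{\pi}\circ Fu)_*(D))\Rightarrow H^{BW}_{p+q}(\Ee,D)$ with abutment given by the colimit description of Baues--Wirsching homology, and then passing to Hochschild--Mitchell notation ``after identifying the various terms,'' exactly as you do. The diagonal-bimodule point you single out as the real obstacle --- that $H^{HM}_p(\Bb,N)$ is $\Tor_p^{\Bb^{op}\times\Bb}(\Z\Bb,N)$ rather than the plain colimit computing $H_p(\Bb^{op}\times\Bb,N)$ --- is precisely the identification the paper performs without further comment, so your account matches, and is if anything more explicit about this step than, the published argument.
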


Similarly, we can again identify the terms in the $E_2$-page of the spectral sequence and obtain:

\begin{corollary}
Let $u: \Ee\rightarrow \Bb$ be a functor between small categories and $\Aa$ a cocomplete abelian category with exact coproducts. Given a natural system $D: F\Ee \rightarrow \Aa$, there is a third quadrant homology spectral sequence 
$$E^2_{p,q}\cong H^{HM}_p(\Bb, \h_q((u^{op}\times u)\circ \pi/-, D\circ Q_{(-)}))\Rightarrow H^{BW}_{p+q}(\Ee, D)$$
which is functorial with respect to natural transformations and where  
$$\h_q((u^{op}\times u)\circ \pi/-, D\circ Q_{(-)})= \clim_q^{(u^{op}\times u)\circ \pi/-}(D\circ Q_{(-)}): \Bb^{op}\times \Bb\rightarrow \Aa.$$
\end{corollary}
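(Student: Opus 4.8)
The plan is to obtain this statement, exactly as in the cohomological case of Corollary~\ref{Cor+HM}, by rewriting the $E^2$-term of the immediately preceding Theorem in terms of colimits over comma categories. The relevant functor is
$$u' = (u^{op}\times u)\circ\pi = \tilde{\pi}\circ Fu : F\Ee \longrightarrow \Bb^{op}\times\Bb,$$
so that the outer homology $H_p(\Bb^{op}\times\Bb, -) = H_p^{HM}(\Bb, -)$ is Hochschild--Mitchell homology of $\Bb$, and the abutment is the Baues--Wirsching homology $H_*^{BW}(\Ee, D) = \clim_*^{F\Ee} D$. Everything will run dual to Corollary~\ref{Cor+HM}, with the satellites of the left Kan extension replacing those of the right Kan extension.

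First I would invoke the comma-category description of the left satellites recorded just before Corollary~\ref{H_*andre+GZ}, namely the dual of \cite[Application 2]{GZ}: for each object $c$ of $\Bb^{op}\times\Bb$ there is an isomorphism
$$(Lan_q^{u'} D)(c) \cong \clim_q^{u'/c}(D\circ Q_c),$$
natural in $c$ and in $D$, where $Q_c : u'/c \to F\Ee$ is the forgetful functor from the comma category. By definition the right-hand side is the value at $c$ of the natural system $\h_q((u^{op}\times u)\circ\pi/-, D\circ Q_{(-)})$ appearing in the statement. Substituting this identification into the $E^2$-term $H_p^{HM}(\Bb, L_q((u^{op}\times u)\circ\pi)_*(D))$ of the preceding Theorem immediately produces the asserted spectral sequence, and its functoriality with respect to natural transformations is inherited from that of the underlying homology Andr\'e spectral sequence (Theorem~\ref{H_*andre} and Corollary~\ref{H_*andre+GZ}).

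The hard part here is not the computation but the bookkeeping. I must check that the naturality of the isomorphism above in the variable $c$ is exactly what allows the individual colimits $\clim_q^{u'/c}(D\circ Q_c)$ to assemble into a single functor $\Bb^{op}\times\Bb \to \Aa$, i.e.\ a genuine $\Bb$-bimodule, so that the outer Hochschild--Mitchell homology in degree $p$ is well defined. This is precisely the naturality already exploited in Corollary~\ref{Cor+HM} and in its homological sibling Corollary~\ref{H_*BW+GZ}, so no new obstacle arises; once the variances are matched against the conventions of Corollary~\ref{H_*andre+GZ} (note the $\Bb^{op}\times\Bb$, and the comma category $u'/c$ rather than $c/u'$ in the homology case), the argument is entirely routine and purely dual to that of Corollary~\ref{Cor+HM}.
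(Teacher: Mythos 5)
Your proposal is correct and follows essentially the same route as the paper's own (implicit) proof: the paper obtains this corollary from the immediately preceding theorem by identifying the coefficient functor $L_q((u^{op}\times u)\circ\pi)_*(D)=Lan_q^{(u^{op}\times u)\circ\pi}(D)$ objectwise with $\clim_q^{(u^{op}\times u)\circ\pi/-}(D\circ Q_{(-)})$ via the dual of \cite[Application 2]{GZ}, exactly as done in Corollaries \ref{H_*andre+GZ} and \ref{H_*BW+GZ}. The naturality in the object of $\Bb^{op}\times\Bb$ that you check in order to assemble the fiberwise colimits into a single $\Bb$-bimodule is precisely the point the paper also relies on, so there is no divergence in approach.
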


As above in the special case $D=\pi^*M$ for a bimodule $M: \Ee^{op}\times \Ee\rightarrow \Aa$ on $\Ee$ we can further identify the $E^2$-term of the spectral sequence similarly as in Theorem \ref{Thm+HM} to obtain a spectral sequence for Hochschild-Mitchell homology of the form:
$$E^2_{p,q}\cong H^{HM}_p(\Bb,  L_q ((u^{op}\times u)_*)(M))\Rightarrow H_{p+q}^{HM}(\Ee, M)$$   

And again we can now identify the $E^2$-page in terms of local fiber data as above in Corollary \ref{Cor+HM}. This spectral sequence could also be derived directly from the homological Andr\'e spectral sequence of Theorem \ref{H_*andre} using the interpretation of Hochschild-Mitchell homology as an appropriate homology of small categories.\\

With similar arguments as above, we can also derive cohomology and homology spectral sequences for more special coefficient systems like $\Ee$-modules, local systems or trivial systems, which we will leave to the interested reader. For the case of $\Ee$-modules we only note that one just recovers the cohomology and homology Andr\'e spectral sequences of Theorems \ref{H^*andre} and \ref{H_*andre}. In the case of local systems, the spectral sequences can also be interpreted as a Leray type spectral sequence for the induced map $Bu: B\Ee\rightarrow B\Bb$ between the classifying spaces of the categories $\Ee$ and $\Bb$.

\subsection{A spectral sequence for the cohomology of algebraic theories.}
Let us now consider an application to the cohomology of algebraic theories. Jibladze and Pirashvili \cite{JP} constructed a general cohomology theory for algebraic theories, which gives a well-behaved cohomology theory for rings and algebras. They also indicated the importance of having more general coefficient systems than just modules. One approach towards this is to interpret cohomology of algebraic theories again as an appropriate Baues-Wirsching cohomology with natural systems as coefficients, see \cite[Sect. 6]{JP}. One needs to use these more general coefficient systems, for example, when classifying extensions of algebraic theories.\\

Let us first recall the definition of an algebraic theory:

\begin{definition} 
A {\it (finitary) algebraic theory} is a category whose objects are the natural numbers $\nn$, denoted $\zero, \one, \two, \dots \en, \ldots ,$  equipped with distinguished isomorphisms 
$$\phi_n\colon\en\stackrel\cong\longrightarrow \one^n$$
between each object $\en$ and the product of $n$ copies of the object $\one$. 
Morphisms between algebraic theories are simply functors which are identities on objects and preserve finite products, i.\ e.\ functors preserving the morphisms $\phi_n$ for all $n$. The category of algebraic theories will be denoted by $\theories$. 
\end{definition}

Now let $\aaa$ be an algebraic theory. There is an equivalence of categories \cite[Sect. 6]{JP}
$$\Ab(\theories/\aaa)\cong \Nat^{th}(\aaa^{op})$$
where $\Ab(\theories/\aaa)$ is the category of internal abelian group objects in the comma category $\theories/\aaa$ and $\Nat^{th}(\aaa^{op})$ is the category of coproduct-preserving natural systems on the opposite category $\aaa^{op}$, that is, natural systems $D$ such that for any diagram in $\aaa$ of the form
$$
\xymatrix@R3
pt{&&&&y_1\\x\ar[rr]^-f&& y_1\times y_2\times \ldots\times y_n\ar[rru]^-{p_1}\ar[rrd]_-{p_n}
&&\vdots\\
&&&&y_n,}$$
where $p_i$ are the canonical projections, the  homomorphism
$$D(f^{op})\rightarrow D(f^{op}\circ {p_1}^{op})\oplus\cdots \oplus D(f^{op}\circ {p_n}^{op})$$
induced by the structure maps $(p_i^{op})^*$ of Definition \ref{def:structure} \eqref{structure} is an isomorphism.

The cohomology 
with coefficients $M$ in $\Ab(\theories/\aaa)$ 
of  an algebraic theory $\aaa$ 
is defined as the Baues-Wirsching cohomology
$$H^*(\aaa, M)=H^*_{BW}(\aaa^{op}, \tilde{M})$$
where $\tilde{M}$ is the natural system associated to $M$ by the above equivalence between the categories $\Ab(\theories/\aaa)$ and $\Nat^{th}(\aaa^{op})$.
The cohomology may be calculated with the complex given in Definition \ref{bwcochn} or using a smaller normalized complex as explained in~\cite{BT}.
Dually, we can also define the homology of algebraic theories as
$$H_*(\aaa, M)=H_*^{BW}(\aaa^{op}, \tilde{M}).$$

Now let $u: \eee\rightarrow \bbb$ be a morphism of algebraic theories and $M$ a coefficient system from $\Ab(\theories/\eee)$, then it follows from 
Corollary \ref{H^*BW+GZ} that there exists a first quadrant cohomology spectral sequence
$$E_2^{p, q}\cong H^p_{BW}(\bbb^{op}, \h^q(-/Fu^{op}, \tilde{M}\circ Q^{(-)}))\Rightarrow H^{p+q}(\eee, M).$$
Dually, from Corollary \ref{H_*BW+GZ} we get a third quadrant homology spectral sequence
$$E^2_{p, q}\cong H_p^{BW}(\bbb^{op}, \h_q(Fu^{op}/-, \tilde{M}\circ Q_{(-)}))\Rightarrow H_{p+q}(\eee, M).$$
In general though, the $E_2$-pages of these spectral sequences are only given as general Baues-Wirsching cohomology groups and might not always be identified with a certain cohomology or homology of the algebraic theory $\bbb$. 

\section{Constructing spectral sequences for Grothendieck fibrations}

\subsection{Grothendieck fibrations and spectral sequences}

In this section we will analyze the spectral sequence constructions in the particular situation of a Grothendieck fibration of small categories. Applying the spectral sequence constructions of the preceding sections to a functor which is a Grothendieck fibration allows to identify the $E_2$-pages with simpler cohomology or homology groups keeping track of the fiber data. 

Given any functor $u: \Ee\rightarrow \Bb$  and an object $b$ of $\Bb$, recall that the {\it fiber category}
$\Ee_b=u^{-1}(b)$ is the subcategory of $\Ee$ that fits into the pullback diagram
$$
\xymatrix{\Ee_b\rto
\dto&\Ee\dto^u\\ {*}\rto^b&\Bb.}
$$
The objects of $\Ee_b$ are those objects of  $\Ee$ which map onto $b$ via the functor $u$ and the morphisms are given by those which map to the identity $1_b$.

We now have the following notion of a fibration between small categories due to Grothendieck \cite[Expos\'e VI]{SGA}:
\begin{definition}
Let $\Ee$ and $\Bb$ be small categories. A {\it Grothendieck fibration} is a functor $u: \Ee\rightarrow \Bb$ such that the fibers
$\Ee_b=u^{-1}(b)$ depend contravariantly and pseudofunctorially on the objects $b$ of the category $\Bb$. The category $\Ee$ is also called a {\it category fibered over} $\Bb$.
\end{definition}

There are many equivalent explicit definitions of a Grothendieck fibration.
We only recall here from~\cite{Gr1}, that  $u: \Ee\rightarrow \Bb$ is a Grothendieck fibration if for each object $b$ of $\Bb$ the inclusion functor from the fiber into the comma category
\[
j_b: \Ee_b\to b/u, \quad e\mapsto (e, b\stackrel=\longrightarrow ue)
\]
is coreflexive, i.~e. has a right adjoint left inverse. We have the following general characterization of Grothendieck fibrations:

\begin{theorem}
There is an equivalence of $2$-categories
$$\Fib(\Bb)\stackrel{\simeq}\leftrightarrow \PsdFun(\Bb^{op}, \Cat)$$
between the $2$-category of Grothendieck fibrations $\Fib(\Bb)$ over a category $\Bb$ and the $2$-category of contravariant pseudofunctors $\PsdFun(\Bb^{op}, \Cat)$ from $\Bb$ 
to the category $\Cat$ of small categories.
\end{theorem}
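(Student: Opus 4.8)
The plan is to exhibit the stated equivalence through the two mutually inverse constructions originally due to Grothendieck: the \emph{Grothendieck construction}, sending a pseudofunctor to a fibration, and the \emph{fiber pseudofunctor}, sending a fibration to a pseudofunctor. First I would define a $2$-functor $\Phi\colon \PsdFun(\Bb^{op}, \Cat)\to \Fib(\Bb)$. Given a pseudofunctor $P$, the category $\int_\Bb P$ has as objects the pairs $(b, x)$ with $b\in\Bb$ and $x\in P(b)$, and as morphisms $(b,x)\to(b',x')$ the pairs $(f, \phi)$ with $f\colon b\to b'$ in $\Bb$ and $\phi\colon x\to P(f)(x')$ in $P(b)$; composition is defined using the coherence isomorphisms of $P$. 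The projection $(b,x)\mapsto b$ is then checked to be a Grothendieck fibration, with the cartesian morphisms being exactly those $(f,\phi)$ for which $\phi$ is an isomorphism. On a pseudonatural transformation $\eta\colon P\Rightarrow P'$ one obtains a functor over $\Bb$, and on a modification a natural transformation over $\Bb$, so that $\Phi$ becomes a $2$-functor.

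Conversely I would define $\Psi\colon \Fib(\Bb)\to\PsdFun(\Bb^{op},\Cat)$ by fixing a cleavage, that is, a choice of cartesian lifts, which the coreflexivity characterization recalled just above the theorem supplies canonically: the right adjoint left inverse of $j_b\colon\Ee_b\to b/u$ sends an object $(x',f\colon b\to u x')$ to its cartesian reindexing in $\Ee_b$. Setting $P_u(b)=\Ee_b$ and letting $P_u(f)=f^*\colon\Ee_{b'}\to\Ee_b$ be reindexing along cartesian lifts, the uniqueness up to unique isomorphism of cartesian lifts yields the comparison isomorphisms $f^*g^*\cong(gf)^*$ and $\mathrm{id}\cong(1_b)^*$ making $P_u$ a pseudofunctor. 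A fibered functor (one preserving cartesian morphisms) then induces a pseudonatural transformation, and a natural transformation over $\Bb$ a modification, so $\Psi$ is again a $2$-functor.

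The final step is to produce the two pseudonatural equivalences $\Psi\Phi\simeq\mathrm{id}$ and $\Phi\Psi\simeq\mathrm{id}$. For the first, the fiber of $\int_\Bb P$ over $b$ is isomorphic to $P(b)$, and the induced reindexing recovers $P(f)$ up to the coherence data, whence $\Psi\Phi(P)\simeq P$. For the second, given a fibration $u\colon\Ee\to\Bb$, the comparison functor $\int_\Bb P_u\to\Ee$ sending $(b,x)$ with $x\in\Ee_b$ to $x\in\Ee$ is an equivalence over $\Bb$, since every object of $\Ee$ lies in a unique fiber and every morphism of $\Ee$ factors essentially uniquely through a cartesian lift followed by a vertical morphism.

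The main obstacle I expect is coherence. Verifying that $\Psi$ lands in genuine pseudofunctors requires checking the associativity and unit axioms for the comparison isomorphisms $f^*g^*\cong(gf)^*$, which rests delicately on the uniqueness properties of cartesian lifts together with the triangle and pentagon identities; moreover one must check that $\Phi$ and $\Psi$ respect composition and identities not strictly but up to coherent isomorphism, and that the equivalences above are themselves pseudonatural and satisfy the required compatibilities on $1$- and $2$-cells. Because the correspondence is only an equivalence rather than an isomorphism of $2$-categories, the genuine labour lies in carrying all of this coherence data consistently through both directions.
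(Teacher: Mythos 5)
Your proposal is correct and follows essentially the same route as the paper, which likewise establishes the equivalence via the Grothendieck construction $\int_{\Bb}$ in one direction and the fiber pseudofunctor $b\mapsto \Ee_b$ (with reindexing coming from the cleavage supplied by the coreflections $R_b$) in the other, delegating the coherence verifications to \cite{SGA} and Borceux. You simply spell out the outline that the paper cites, including correctly flagging that the real labour lies in the pseudofunctor coherence axioms and the pseudonaturality of the two comparison equivalences.
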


\begin{proof} For a detailed proof we refer to \cite{SGA} or \cite[Vol.\ 2, 8.3.1]{Bo}.
The equivalence is induced by the Grothendieck construction
$$\int: \PsdFun(\Bb^{op}, \Cat)\rightarrow \Fib(\Bb), \,\,\,\, F\mapsto \int_{\Bb} F.$$
And if conversely $u: \Ee\rightarrow \Bb$ is a Grothendieck fibration the assignment 
$$G: \Bb^{op} \rightarrow \Cat, \, b\mapsto \Ee_b=u^{-1}(b).$$ 
defines a pseudofunctor.
\end{proof}

Let $u: \Ee\rightarrow \Bb$ be a Grothendieck fibration and $D: F\Ee \rightarrow \Aa$ be a natural system on the category $\Ee$, where $\Aa$ is a complete abelian category with exact products.

We get a local system 
$\h^q_{BW}(G(-), D|_{F\Ee_{(-)}}): \Bb\rightarrow \Aa$ from the associated pseudo\-functor $G: \Bb^{op} \rightarrow \Cat$ by assigning to every object $b$ of the category $\Bb$ the $q$-th Baues-Wirsching cohomology of the category $G(b)$
$$\h^q_{BW}(G(-), D|_{F\Ee_{(-)}}): \Bb\rightarrow \Aa, \,\,\, b\mapsto H_{BW}^q(G(b), D|_{F\Ee_b})$$
with coefficients in the natural system
$D|_{F\Ee_b}: F\Ee_b\rightarrow \Aa$.

For each object $b$ of the base category $\Bb$ 
we have a cartesian diagram
$$
\xymatrix{\Ee_b\dto_{=}\rto^
{j_b}&b/u\dto^{Q^b}\\
\Ee_b\dto\rto^
{i_b}\dto&\Ee\dto^u\\ {*}\rto^b&\Bb.}
$$
Let $R_{b}$ denote the right adjoint functor of the inclusion functor $j_b$
and let $D_b$ denote the natural system on $b/u$ given by the following composition,
$$D_b: F (b/u) \xrightarrow{F R_b} F\Ee_b \xrightarrow{F i_b} F\Ee \xrightarrow {D} \Aa.$$

Now we get a first quadrant cohomology spectral sequence from the cohomology spectral sequence for the Grothendieck construction of Pirashvili-Redondo \cite{pr} and the equivalence between Grothendieck fibrations and pseudofunctors:
$$E_2^{p,q}\cong H^p(\Bb, \h^q_{BW}(-/u, D_{(-)}))\Rightarrow H^{p+q}_{BW}(\Ee, D)$$

This spectral sequence is functorial with respect to $1$-morphisms, i.e. natural transformations between Grothendieck fibrations. 

To summarize, we have constructed the following cohomology spectral sequence for general Grothendieck fibrations of small categories:

\begin{theorem}\label{H^*BW+G} 
Let $\Ee$ and $\Bb$ be small categories and let $u: \Ee\rightarrow \Bb$ be a Grothendieck fibration. Given a natural system $D: F\Ee\rightarrow \Aa$ on $\Ee$, where $\Aa$ is a complete abelian category with exact products, there is a first quadrant spectral sequence 
$$E_2^{p,q}\cong H^p(\Bb, \h^q_{BW}(-/u, D_{(-)}))\Rightarrow H^{p+q}_{BW}(\Ee, D)$$
which is functorial with respect to $1$-morphisms of Grothendieck fibrations. 
\end{theorem}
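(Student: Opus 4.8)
The plan is to obtain the spectral sequence by specializing the general Baues-Wirsching spectral sequence of Corollary \ref{H^*BW+GZ} and then exploiting the fibration hypothesis to simplify the $E_2$-page; equivalently, one may transport the Pirashvili-Redondo spectral sequence for a Grothendieck construction \cite{pr} across the equivalence $\Fib(\Bb)\simeq\PsdFun(\Bb^{op},\Cat)$ recorded above, since $\Ee\cong\int_{\Bb}G$. First I would apply Corollary \ref{H^*BW+GZ} to the functor $u$, which already produces a first quadrant spectral sequence converging to $H^{p+q}_{BW}(\Ee,D)$ with $E_2$-page $H^p_{BW}(\Bb,\h^q(-/Fu,D\circ Q^{(-)}))$, the coefficient being the natural system $\llim^q_{-/Fu}(D\circ Q^{(-)})$ on $F\Bb$. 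The abutment is thus already the desired one, and the whole problem reduces to rewriting the $E_2$-term.

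The heart of the argument is to show that, for a Grothendieck fibration, this coefficient natural system on $F\Bb$ is pulled back from a local system on $\Bb$ whose value at $b$ is the fiberwise Baues-Wirsching cohomology. The crucial input is the coreflexivity of $j_b\colon\Ee_b\to b/u$: its right adjoint left inverse $R_b$ induces, after applying the factorization functor $F$, the comparison between the natural system $D_b$ on $F(b/u)$ and its restriction $D|_{F\Ee_b}$ on $F\Ee_b$ used to define $D_b$. Since a coreflection induces a homotopy equivalence of nerves, it yields a natural isomorphism $H^q_{BW}(b/u,D_b)\cong H^q_{BW}(\Ee_b,D|_{F\Ee_b})$, so the two descriptions of the coefficient coincide. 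Using the cartesian lifts furnished by the fibration, one then checks that the derived limit $\llim^q_{g/Fu}(D\circ Q^{(-)})$ evaluated at a morphism $g$ of $\Bb$ depends only on the relevant endpoint object, i.e. that the coefficient natural system factors as $\pi^*p^*\,\h^q_{BW}(-/u,D_{(-)})$ through $F\Bb\xrightarrow{\pi}\Bb^{op}\times\Bb\xrightarrow{p}\Bb$.

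Granting this, the reduction of the outer cohomology is immediate: by the identification $H^p_{BW}(\Bb,\pi^*p^*L)\cong H^p(\Bb,L)$ for a $\Bb$-module $L$ recorded in the Proposition above, the $E_2$-page becomes $H^p(\Bb,\h^q_{BW}(-/u,D_{(-)}))$, as claimed. Finally, functoriality with respect to $1$-morphisms of fibrations would follow by combining the naturality of the André spectral sequence asserted in Corollary \ref{H^*BW+GZ} with the naturality of the right Kan extensions along $Fu$ and of the coreflexive adjunctions $j_b\dashv R_b$, all of which are compatible with cartesian-morphism-preserving functors over $\Bb$.

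I expect the main obstacle to be the identification in the second paragraph: verifying that the derived right Kan extension along $Fu$, computed as a derived limit over the factorization comma category $g/Fu$, genuinely collapses to the Baues-Wirsching cohomology of the fiber and is constant along the fibration. This is precisely the content one extracts from Pirashvili-Redondo, and it requires lifting the coreflexive adjunction $j_b\dashv R_b$ to the factorization-category level and controlling how cartesian lifts reorganize $g/Fu$. The cohomological invariance under this adjunction is the step that uses the full strength of the fibration hypothesis, rather than mere functoriality, and is where I would concentrate the careful work.
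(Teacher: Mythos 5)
Your parenthetical alternative---transporting the Pirashvili--Redondo spectral sequence across the $2$-equivalence $\Fib(\Bb)\simeq\PsdFun(\Bb^{op},\Cat)$, using $\Ee\simeq\int_{\Bb}G$---is in fact the paper's \emph{entire} proof of this theorem; the paper does not attempt to rework Corollary \ref{H^*BW+GZ}. The route you actually develop has a genuine gap: the coefficient natural system $\h^q(-/Fu,D\circ Q^{(-)})=\llim^q_{-/Fu}(D\circ Q^{(-)})$ on $F\Bb$ does \emph{not} factor through $\pi^*p^*$ of any $\Bb$-module, even for fibrations, so the $E_2$-page of Corollary \ref{H^*BW+GZ} cannot be ``rewritten'' into the stated one. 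Test $u=\mathrm{id}_{\Bb}$, which is a Grothendieck fibration: the comma category $g/Fu$ then has initial object $(g,\mathrm{id}_g)$, so the coefficient at an object $g$ of $F\Bb$ is $D(g)$ concentrated in degree $q=0$; it depends on the whole morphism $g$, not on an endpoint. Concretely, let $\Bb$ be the poset $\{0<1\}$ with arrow $g$ and take $D(1_0)=D(1_1)=0$, $D(g)=A\neq 0$: any natural system of the form $\pi^*p^*L$ takes at $g$ the same value as at one of the identities $1_0,1_1$, so it would vanish at $g$, whereas the André coefficient there is $A$. Thus the spectral sequences of Corollary \ref{H^*BW+GZ} and of Theorem \ref{H^*BW+G} have genuinely different $E_2$-pages (they arise from different constructions with the same abutment), and your reduction of ``the whole problem'' to rewriting the $E_2$-term fails at the first step. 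Your outer reduction $H^p_{BW}(\Bb,\pi^*p^*L)\cong H^p(\Bb,L)$ is fine, but it is applied to an identification that does not hold.

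A second, independent error is the claim that coreflexivity of $j_b\colon\Ee_b\to b/u$ gives $H^q_{BW}(b/u,D_b)\cong H^q_{BW}(\Ee_b,D|_{F\Ee_b})$ ``since a coreflection induces a homotopy equivalence of nerves''. Nerve equivalences control cohomology with \emph{local} coefficients only; Baues--Wirsching cohomology with natural systems is invariant under equivalences of categories, not under adjunctions, essentially because $F(-)$ is not $2$-functorial, so $j_b\dashv R_b$ induces no adjunction between $F\Ee_b$ and $F(b/u)$. The isomorphism you assert is verbatim the paper's definition of a \emph{local} natural system, imposed as an extra hypothesis only in Theorem \ref{H^*BW+Glocal} and corresponding to h-locality in \cite{pr}; if it held automatically, that hypothesis would be vacuous. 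For the statement at hand it is also unnecessary, since the $E_2$-term is expressed through $b/u$ and $D_b$, so invoking it conflates Theorem \ref{H^*BW+G} with Theorem \ref{H^*BW+Glocal}. To repair the write-up, discard the André-route identification and instead make the transport argument precise: identify $\Ee$ with the Grothendieck construction of $G(b)=\Ee_b$, match the Pirashvili--Redondo coefficient at $b$ with $H^q_{BW}(b/u,D_b)$ for $D_b=D\circ Fi_b\circ FR_b$, and establish functoriality in $b$ (and in $1$-morphisms of fibrations) via cartesian lifts---that is where the fibration hypothesis genuinely enters, and it is the content the paper delegates to \cite{pr}.
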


In order to identify the $E_2$-term of this spectral sequence with local data of the fiber category we need the following definition, which corresponds to the property of h-locality introduced in \cite{pr}.

\begin{definition}
A natural system $D: F\Ee\rightarrow \Aa$ is called {\it local} if the adjoint functor $R_{b}$ of the inclusion functor $j_b: \Ee_b\to b/u$ induces an isomorphism in Baues-Wirsching cohomology
$$H^q_{BW}(b/u, D_b)\cong H^q_{BW}(\Ee_b, D\circ Fi_b)$$
for every $q$ and every object $b$ of the base category $\Bb$, i.~e. we have a natural isomorphism of local coefficient systems
$$\h^q_{BW}(-/u,  D_{(-)})\cong \h^q_{BW}(\Ee_{(-)}, D\circ Fi_{(-)}).$$
\end{definition}

Identifying the $E_2$-page of the above spectral sequence, we now have the following cohomology spectral sequence:

\begin{theorem}\label{H^*BW+Glocal} 
Let $\Ee$ and $\Bb$ be small categories and $u: \Ee\rightarrow \Bb$ be a Grothendieck fibration. Given a local natural system $D: F\Ee\rightarrow \Aa$ on $\Ee$, where $\Aa$ is a complete abelian category with exact products, there is a first quadrant spectral sequence 
$$E_2^{p,q}\cong H^p(\Bb, \h^q_{BW}(\Ee_{(-)}, D\circ Fi_{(-)}))\Rightarrow H^{p+q}_{BW}(\Ee, D)$$
with the local coefficient system 
$$\h^q_{BW}(\Ee_{(-)}, D\circ Fi_{(-)}): \Bb\rightarrow \Aa, \,\, b\mapsto H^q_{BW}(\Ee_b, D\circ Fi_b).$$
Furthermore, the spectral sequence is functorial with respect to $1$-morphisms of Grothendieck fibrations.
\end{theorem}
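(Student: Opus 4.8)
The plan is to deduce the statement directly from Theorem \ref{H^*BW+G} by reinterpreting its $E_2$-page through the locality hypothesis, without constructing a new spectral sequence. First I would recall the first quadrant spectral sequence
$$E_2^{p,q}\cong H^p(\Bb, \h^q_{BW}(-/u, D_{(-)}))\Rightarrow H^{p+q}_{BW}(\Ee, D)$$
already provided by Theorem \ref{H^*BW+G}, whose abutment $H^{p+q}_{BW}(\Ee, D)$ is exactly the target group claimed here. Thus neither the filtration nor the convergence needs to be altered; only the description of the $E_2$-term has to be rewritten in terms of the fibers $\Ee_b$.

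The key step is to invoke the definition of a local natural system. Since $D$ is local by hypothesis, the right adjoints $R_b$ of the inclusions $j_b:\Ee_b\to b/u$ induce a natural isomorphism of local coefficient systems on $\Bb$,
$$\h^q_{BW}(-/u, D_{(-)})\stackrel{\cong}\longrightarrow \h^q_{BW}(\Ee_{(-)}, D\circ Fi_{(-)}).$$
Because the base $\Bb$ is fixed and the cohomology $H^p(\Bb,-)=\llim^p_{\Bb}$ is the $p$-th right derived functor of the limit functor on $\Fun(\Bb,\Aa)$, it is functorial in the coefficient system and therefore carries this natural isomorphism to an isomorphism
$$H^p(\Bb, \h^q_{BW}(-/u, D_{(-)}))\stackrel{\cong}\longrightarrow H^p(\Bb, \h^q_{BW}(\Ee_{(-)}, D\circ Fi_{(-)})).$$
Composing with the identification of Theorem \ref{H^*BW+G} yields the asserted $E_2$-page, and by construction $b\mapsto H^q_{BW}(\Ee_b, D\circ Fi_b)$ is precisely the local coefficient system $\h^q_{BW}(\Ee_{(-)}, D\circ Fi_{(-)})$ appearing in the statement. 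I would emphasize that no argument with the differentials is required: the whole modification takes place at the level of coefficient systems, so it is automatically the same spectral sequence with its $E_2$-term re-expressed.

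Finally, functoriality with respect to $1$-morphisms of Grothendieck fibrations is inherited from Theorem \ref{H^*BW+G}, since we are dealing with the same spectral sequence. The only genuine point I expect to check is that this inherited functoriality is compatible with the new description of $E_2$, i.e. that the locality isomorphism above is natural not merely in the object $b\in\Bb$ (which is built into the definition of locality) but also with respect to cartesian functors between fibrations. This reduces to the compatibility of the adjoints $R_b$ with the pseudofunctorial structure $G:\Bb^{\mathrm{op}}\to\Cat$, which holds because the $R_b$ are determined by that structure; this is the only place where a small verification, rather than a formal substitution, is needed.
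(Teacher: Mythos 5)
Your proposal is correct and takes essentially the same route as the paper, whose proof likewise deduces the theorem immediately from the spectral sequence of Theorem \ref{H^*BW+G} by substituting the natural isomorphism of coefficient systems $\h^q_{BW}(-/u, D_{(-)})\cong \h^q_{BW}(\Ee_{(-)}, D\circ Fi_{(-)})$ supplied by the definition of a local natural system. Your closing remarks on compatibility of the adjoints $R_b$ with cartesian functors merely spell out the naturality that the paper's one-line proof leaves implicit.
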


\begin{proof} 
This follows immediately from the construction of the cohomology spectral sequence in Theorem \ref{H^*BW+G} and the definition of a local natural system $D: F\Ee\rightarrow \Aa$ on the total category $\Ee$.
\end{proof}

Dually, we can also derive a homology version of the spectral sequence for a Grothendieck fibration by invoking the obvious dual notions and constructions involved:

\begin{theorem}\label{H_*BW+Glocal} Let $\Ee$ and $\Bb$ be small categories and $u: \Ee\rightarrow \Bb$ be a Grothendieck fibration. Given a colocal natural system $D: F\Ee\rightarrow \Aa$ on $\Ee$, where $\Aa$ is a cocomplete abelian category with exact coproducts, there is a third quadrant spectral sequence 
$$E^2_{p,q}\cong H_p(\Bb, \h_q^{BW}(\Ee_{(-)}, D\circ Fi_{(-)}))\Rightarrow H_{p+q}^{BW}(\Ee, D)$$
with the local coefficient system 
$$\h_q^{BW}(\Ee_{(-)}, D\circ Fi_{(-)}): \Bb\rightarrow \Aa, \,\, b\mapsto \h_q^{BW}(\Ee_b, D\circ Fi_b).$$
Furthermore, the spectral sequence is functorial with respect to $1$-morphisms of Grothendieck fibrations.
\end{theorem}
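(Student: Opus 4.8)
The plan is to obtain the statement as the exact dual of Theorems \ref{H^*BW+G} and \ref{H^*BW+Glocal}, so the proof runs entirely by duality. First I would establish the homological analogue of the Grothendieck-fibration spectral sequence of Theorem \ref{H^*BW+G}. Starting from the homological Andr\'e spectral sequence of Corollary \ref{H_*BW+GZ} and the dual of the Pirashvili-Redondo construction \cite{pr}, together with the equivalence between Grothendieck fibrations and pseudofunctors, one obtains a third quadrant spectral sequence
$$E^2_{p,q}\cong H_p(\Bb, \h_q^{BW}(u/-, D_{(-)}))\Rightarrow H_{p+q}^{BW}(\Ee, D),$$
whose $E^2$-term now records the Baues-Wirsching homology of the comma categories $u/b$ with coefficients in the natural system $D_{(-)}$.

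Next I would make precise the notion of a \emph{colocal} natural system, dually to the local condition used above. Namely, $D$ is colocal if the adjoint functor associated to the inclusion $\Ee_b\to u/b$ induces an isomorphism in Baues-Wirsching homology
$$H_q^{BW}(u/b, D_b)\cong H_q^{BW}(\Ee_b, D\circ Fi_b)$$
for every $q$ and every object $b$ of $\Bb$, equivalently a natural isomorphism of local coefficient systems $\h_q^{BW}(u/-, D_{(-)})\cong \h_q^{BW}(\Ee_{(-)}, D\circ Fi_{(-)})$. Feeding this hypothesis into the spectral sequence above replaces the $E^2$-term by $H_p(\Bb, \h_q^{BW}(\Ee_{(-)}, D\circ Fi_{(-)}))$, which yields the asserted spectral sequence; functoriality with respect to $1$-morphisms of fibrations is inherited from the cohomological case.

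The main obstacle I anticipate is checking that the adjunction dualizes correctly. In the cohomology argument the inclusion $j_b\colon \Ee_b\to b/u$ is coreflexive, and coreflexivity is exactly what produces the identification of the $E_2$-page. On the homology side one instead works with the comma categories $u/b$, so the relevant comparison functor and its adjoint must be reidentified, and one must verify that the Grothendieck fibration hypothesis still delivers the isomorphism in Baues-Wirsching homology encoded by colocality. Once this dual adjunction is in place, the remaining steps are formal consequences of the constructions in the preceding theorems.
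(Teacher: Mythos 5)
Your overall strategy --- dualize the Pirashvili--Redondo based construction of Theorems \ref{H^*BW+G} and \ref{H^*BW+Glocal}, define colocality as the corresponding isomorphism condition in homology, and substitute into the $E^2$-page --- is exactly the route the paper intends; the paper itself offers no more proof than ``invoking the obvious dual notions and constructions''. However, your execution dualizes the wrong piece of data: you replace the comma categories $b/u$ by $u/b$. The obstacle you flag at the end is not a technicality but a genuine failure. For a Grothendieck \emph{fibration} the inclusion $\Ee_b\to u/b$ admits no adjoint at all: cartesian lifts of $\alpha\colon ue\to b$ produce objects over the \emph{domain} of $\alpha$, never over $b$. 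Concretely, for $\Bb$ the arrow category $0\to 1$ and the fibration classified by $\phi\colon \Cc_1\to\Cc_0$, a left adjoint to $\Ee_1\to u/1$ would be precisely a left adjoint to $\phi$, which need not exist; a reflection $\Ee_b\to u/b$ exists exactly when $u$ is an \emph{op}fibration, so your argument as written would prove a statement about cofibred categories, not the theorem at hand. Consequently your coefficient system $D_{(-)}$ on $u/b$ is not even defined --- its construction requires the adjoint --- and your intermediate spectral sequence $E^2_{p,q}\cong H_p(\Bb,\h_q^{BW}(u/-,D_{(-)}))$ has no meaning for a fibration.

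The correct dualization keeps all categorical data of the cohomological case unchanged and dualizes only the homological algebra (limits to colimits, $\Ext$ to $\Tor$, the Pirashvili--Redondo argument carried out in $\Aa^{op}$): one uses the same comma categories $b/u$, the same coreflection $j_b\dashv R_b$ with $R_bj_b=\mathrm{id}$, and the same natural system $D_b=D\circ Fi_b\circ FR_b$ on $b/u$, obtaining
$$E^2_{p,q}\cong H_p(\Bb,\h_q^{BW}(-/u,D_{(-)}))\Rightarrow H_{p+q}^{BW}(\Ee,D),$$
and colocality should be defined as the requirement that the natural map
$$H_q^{BW}(\Ee_b,D\circ Fi_b)\longrightarrow H_q^{BW}(b/u,D_b),$$
induced covariantly by $j_b$ (note $D_b\circ Fj_b=D\circ Fi_b$ since $R_bj_b=\mathrm{id}$), is an isomorphism for all $q$ and all objects $b$. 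With that definition your substitution into the $E^2$-page and the functoriality claim go through as you describe. A smaller inaccuracy: Corollary \ref{H_*BW+GZ} is not the right input either, since its $E^2$-page involves comma categories $Fu/-$ indexed over the factorization category $F\Bb$ and Baues--Wirsching homology of $\Bb$, whereas the fibration theorem has ordinary homology of $\Bb$ on the $E^2$-page; the paper obtains the latter by dualizing Pirashvili--Redondo, not from the Andr\'e spectral sequence.
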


As in the preceding sections we can specialize these cohomology and homology spectral sequences for coefficient systems of bimodules, modules, local systems or trivial systems. The associated spectral sequences will then for example in the case of bimodules relate the Hochschild-Mitchell cohomology or homology groups in a Grothendieck fibration. We leave it to the interested reader to work out the details. 

\subsection{Applications and examples.} In this final section we will give interpretations and applications of particular spectral sequences, which were studied before in the literature. 

Formulated here in the language of Grothendieck fibrations, the cohomology spectral sequence of Theorem \ref{H^*BW+G} can also be seen as an equivalent version of the Pirashvili-Redondo spectral sequence, which converges to the Baues-Wirsching cohomology of the Grothendieck construction of a pseudofunctor  (see \cite{pr}, \cite{pr2}). And this spectral sequence again generalizes several others. Let us study some important examples.

\begin{example}
Let $\Cc$ be a small category with an action of a group $G$. The group $G$ can be considered as a small category with a single object $\star$ and the group $G$ as morphism set.
A $G$-action on $\Cc$ is then simply given as a functor
$$F: G^{op}\rightarrow \Cat,\,\,\, F(\star)=\Cc.$$ 
We can view $F$ as a pseudofunctor and the Grothendieck construction gives a small category $\int_G F.$
The above equivalence between pseudofunctors and Grothendieck fibrations therefore gives a Grothendieck fibration of small categories
$$u: \int_G F \rightarrow G.$$
Given in addition a natural system $D: \int_G F\rightarrow \Ab$, we get therefore a first quadrant cohomology spectral sequences of the form:
$$E_2^{p,q}\cong H^p(G, \h^q_{BW}(-/u, D_{(-)}))\Rightarrow H^{p+q}_{BW}(\int_G F, D).$$
Finally, we can identify the fiber data in this particular situation, which allows to rewrite the $E_2$-page and get:
$$E_2^{p,q}\cong H^p(G, H^q_{BW}(\Cc, D))\Rightarrow H^{p+q}_{BW}(\int_G F, D).$$
This can be interpreted as a categorical version of the classical Cartan-Leray spectral sequence, which was also derived as a special case in \cite[Remark 5.4]{pr} and for local coefficient systems in \cite[Proposition 5.4.2]{dH}.

Let us also study the particular situation of a $k$-linear category with a $G$-action, which is of importance in representation theory. 
Let $k$ be a field and $\Cc$ be a $k$-linear category, i.~e. a small category where morphism sets are $k$-vector spaces and composition is $k$-bilinear. 

Assume now that a group $G$ is acting freely on $\Cc$. As before we can take the Grothendieck construction which in this situation gives the quotient or orbit category $\int_G F= \Cc/G$. So we get as a special case a Cartan-Leray spectral sequence, which calculates the Baues-Wirsching cohomology of the quotient category:
$$E_2^{p,q}\cong H^p(G, H^q_{BW}(\Cc, D))\Rightarrow H^{p+q}_{BW}(\Cc/G, D).$$
For bimodule coefficients $M$ of $\Fun(\Cc/G, \Ab)$, this spectral sequence is the Cartan-Leray spectral sequence of Ciblis-Redondo \cite[Theorem 3.11]{CR} (see also \cite[Remark 5.3]{pr}), which calculates the Hochschild-Mitchell cohomology of the quotient category $\Cc/G$.

We can also derive dual homology versions of the above cohomology spectral sequences, but will leave these straightforward constructions to the interested reader.
\end{example}

Let us finally mention the case of cohomology and homology with local or trivial coefficient systems, which correspond to singular cohomology and homology of the associated classifying spaces with local or constant coefficients. 

\begin{example}
Let $u: \Ee\rightarrow \Bb$ be a Grothendieck fibration and $L: \pi_1\Ee\rightarrow \Ab$ a local system on $\Ee$. Then we get the following first quadrant cohomology spectral sequence from Theorem \ref{H^*BW+Glocal}:
$$E_2^{p,q}\cong H^p(B\Bb, \h^q(B\Ee_{(-)}, L_{(-)}))\Rightarrow H^{p+q}(B\Ee, L)$$
with the local system $\h^q(B\Ee_{(-)}, L_{(-)}): \pi_1\Bb\rightarrow \Ab$ as coefficients.

Dually, we have a third quadrant homology spectral sequence from Theorem \ref{H_*BW+Glocal}:
$$E^2_{p,q}\cong H_p(B\Bb, \h_q(B\Ee_{(-)}, L_{(-)}))\Rightarrow H_{p+q}(B\Ee, L)$$
with the local system $\h^q(B\Ee_{(-)}, L_{(-)}): \pi_1\Bb\rightarrow \Ab$ as coefficients.

Specializing even further, using trivial systems of coefficients in a constant abelian group $A$, i.~e. functors $A: \mathbb{1}\rightarrow \Ab$, we get immediately the following first quadrant cohomology spectral sequence:
$$E_2^{p,q}\cong H^p(B\Bb, \h^q(B\Ee_{(-)}, A))\Rightarrow H^{p+q}(B\Ee, A)$$

And dually again, we have a third quadrant homology spectral sequence:
$$E^2_{p,q}\cong H_p(B\Bb, \h_q(B\Ee_{(-)}, A))\Rightarrow H_{p+q}(B\Ee, A)$$
\end{example}

In general, starting with a Grothendieck fibration $u:\Ee\rightarrow \Bb$ of small categories and applying the classifying space functor $B(-)$ does not give a fibration or quasi-fibration of topological spaces. But del Hoyo \cite[Theorem 5.3.1]{dH} showed that if $u: \Ee\rightarrow \Bb$ is moreover a Quillen fibration of small categories, which means essentially that all the classifying spaces of the fibers $\Ee_b$ are weakly homotopy equivalent, then applying a {\it fibered} version $B_f(-)$ of the classifying space functor does indeed give a quasi-fibration of topological spaces and therefore the particular spectral sequences above are Serre type spectral sequences.

\vspace*{0.3cm} {\it Acknowledgements.} 
The first author was partially supported by the grants 
MTM2010-15831, 
MTM2010-20692, 
and SGR-1092-2009, 
and the third author by 
MTM2010-15831 
and SGR-119-2009. 
The second author likes to thank the Centre de Recerca Matem\`atica (CRM) in Bellaterra, Spain for inviting him during the research programme Homotopy Theory and Higher Categories (HOCAT).

\bigskip

\end{document}